\documentclass{amsart}
\usepackage{shortsalch, xy, mathabx}
\xyoption{all}
\title{How many adjunctions give rise to the same monad?}
\date{}
\begin{document}
\begin{abstract}
Given an adjoint pair of functors $F,G$, the composite $GF$ naturally gets the structure of a monad. The same monad may arise from many such adjoint pairs of functors, however. Can one describe all of the adjunctions giving rise to a given monad?
In this paper we single out a class of adjunctions with especially good properties, and we develop methods for computing all such adjunctions, up to natural equivalence, which give rise to a given monad. To demonstrate these methods, we explicitly compute the finitary homological presentations of the free $A$-module monad on the category of sets, for $A$ a Dedekind domain. We also prove a criterion, reminiscent of Beck's monadicity theorem, for when there is essentially (in a precise sense) only a single adjunction that gives rise to a given monad.
\end{abstract}

\maketitle
\tableofcontents

\section{Introduction.}

If $\mathcal{C},\mathcal{D}$ are categories and $G: \mathcal{D}\rightarrow \mathcal{C}$ a functor with a left adjoint $F$, then 
the composite $GF$ gets the structure of a 
monad. However, given a monad $T: \mathcal{C}\rightarrow \mathcal{C}$, there may be {\em many} categories $\mathcal{D}$ and 
adjoint pairs $F,G$ such that
$GF = T$ as a monad. We will call such a choice of $\mathcal{D},F,$ and $G$ a {\em presentation} for $T$. 

It has long been known that, 
among all presentations of a given monad $T$, there is an {\em initial} presentation, the Kleisli category of $T$, and a {\em terminal} presentation, the 
Eilenberg-Moore category of $T$. Furthermore, Beck's monadicity theorem gives a necessary and sufficient condition on $G$ for
the presentation $(\mathcal{D},F,G)$ to be equivalent to the Eilenberg-Moore category. (See \cite{MR1712872} for a nice exposition of these ideas.) 
Beck's result has proven very useful, e.g. in algebraic geometry where, in its dual form for comonads, it is the foundation for the general theory of descent; see 
\cite{MR0255631}.

The applications of Beck's monadicity theorem have made it very clear that, given a presentation $(\mathcal{D},F,G)$ of a monad $T$, it is very useful
to be able to tell when $(\mathcal{D},F,G)$ is the terminal presentation (i.e., the Eilenberg-Moore category) of $T$. However, we have long wondered
about what to do when one encounters a presentation of a monad which is {\em not} the terminal (Eilenberg-Moore) presentation, and not even the initial (Kleisli)
presentation. Can one describe the collection of all presentations of a monad? Even better, can one establish some kind of {\em coordinate system}
on the collection of presentations of a monad, so that when one encounters a presentation of a monad (or a comonad, e.g. for applications in descent theory), 
one can give some kind of ``coordinates'' that describe where this
presentation sits in relation to the initial and terminal presentations, and all the other presentations, of the same monad?

In this paper we study the collection of presentations of a given monad, but with a restriction on what presentations we are willing to consider. This is because, given a presentation $(\mathcal{D},F,G)$, one can
trivially produce many more presentations by taking the Cartesian product of $\mathcal{D}$ with any small category. We regard these presentations as
degenerate, and we want to disregard presentations with this kind of redundant information in them. Consequently, in Definition~\ref{def of homological},
we make the definition that a presentation $(\mathcal{D},F,G)$ is said to be {\em homological} if every object $X$ of $\mathcal{D}$ can be 
recovered from the $F,G$-bar construction on $X$ (see Definition~\ref{def of homological} for the precise definition). This definition eliminates the ``redundant''
presentations we wanted to exclude, and has some other good properties, described in Remark~\ref{remark on def of homological}. We also restrict our attention
to what we call ``coequalizable'' monads, that is, those monads $T$ for which the Eilenberg-Moore category has coequalizers; this property is satisfied in
all cases of interest which we know of, and in Remark~\ref{remark on coequalizability} we explain a bit about why that is.

Once these definitions are made, we can prove some nice theorems:
\begin{itemize}
\item In Theorem~\ref{main thm}, we prove that, if $T$ is coequalizable, then the category of natural equivalence classes of homological presentations of $T$ is
equivalent to the partially-ordered collection of reflective replete subcategories of the Eilenberg-Moore category $\mathcal{C}^T$ which contain the Kleisli category $\mathcal{C}_T$.

This means the category of all natural equivalence classes of homological presentations of $T$ is always well-behaved in at least one way: it can't be just any arbitrary category, rather, it is always partially-ordered (i.e., there is at most one morphism from any given object to any other given object). 
\item In Theorem~\ref{coordinatization thm}, when $T$ is coequalizable and $\mathcal{C}^T$ has a biproduct and is Krull-Schmidt,
we actually construct a ``coordinate system'' on the natural equivalence classes of homological presentations of $T$! 
Any homological presentation is determined uniquely (up to natural equivalence) by specifying a subcollection of the collection of isomorphism classes of indecomposable
objects of $\mathcal{C}^T$. In some practical cases, $\mathcal{C}^T$ is the 
category of finitely generated modules over an algebra, and then the vertices of the Auslander-Reiten quiver of $\mathcal{C}^T$ act as ``coordinates'' for the collection of natural equivalence classes of homological presentations of $T$; see Remark~\ref{coordinatization remark}.
\item In Theorem~\ref{criterion for uhp}, we give a simple and usable criterion for the {\em triviality} (up to natural equivalence) of the collection of homological presentations of $T$, i.e.,
a criterion for when, up to natural equivalence, there exists only one homological presentation of $T$ (necessarily the Eilenberg-Moore category of $T$). 
As an example, in Corollary~\ref{field extension monads are uniquely homologically presentable} we show that the base-change monad on module categories
associated to a field extension has this property of unique homological presentability.
\item In Theorem~\ref{main dd corollary}, we 
use Theorems~\ref{main thm} and Theorem~\ref{coordinatization thm}
to explicitly compute the collection of all (natural equivalence classes of) homological finitary presentations of each monad in one particular class of monads: namely, let $A$ be a Dedekind domain, and let $T$ denote the monad on the category of sets which sends a set to the underlying set of the free $A$-module it generates.
We show that the partially-ordered set of natural equivalence classes of finitary homological presentations of $T$ is equivalent to the 
set of functions
\[ \Max\Spec(A) \rightarrow \{ 0, 1, 2, \dots , \infty\} \]
from the set $\Max\Spec(A)$ of maximal ideals of $A$ to the set of extended
natural numbers, under the partial ordering in which we let
$f\leq g$ if and only if $f(\mathfrak{m}) \leq g(\mathfrak{m})$ for all $\mathfrak{m}\in\Max\Spec(A)$. Since the ring of integers $\mathbb{Z}$ is a Dedekind domain, this is a very fundamental example.
(Here a presentation for $T$ is ``finitary'' if its right adjoint functor preserves filtered colimits; this is a condition that, roughly speaking, guarantees that the data of the presentation is determined by ``finite input.'' See Remark~\ref{remark on finitariness}.)
%\item In Theorem~\ref{specific computation}, we use Theorems~\ref{main thm} and Theorem~\ref{coordinatization thm}, as well as some Auslander-Reiten theory,  to explicitly compute the collection of all homological presentations of each monad in one particular class of monads: let $n$ be a positive integer, let $A$ be the $k$-algebra $k[x]/x^{n}$, and let $T: \fgMod(k) \rightarrow \fgMod(k)$ be the base-change monad on the category of finite-dimensional $k$-vector spaces, i.e., $T$ is the composite of the extension of scalars $\fgMod(k)\rightarrow  \fgMod(A)$ with the restriction of scalars $\fgMod(A)\rightarrow \fgMod(k)$. We find that there are precisely $n$ homological presentations of $T$, and we exhibit each of them as a subset of the vertex set of the Auslander-Reiten quiver $\Gamma(A)$!
\end{itemize}

In many cases of interest (e.g. the base-change monads on module categories associated to maps of rings or maps of schemes),
the Eilenberg-Moore category $\mathcal{C}^T$ is actually {\em abelian}, hence $T$ is coequalizable and $\mathcal{C}^T$ has a biproduct automatically,
and frequently $\mathcal{C}^T$ is actually quite computable and understandable. Under those circumstances our results seem to be fairly useful, and as we
hope Theorem~\ref{main dd corollary} demonstrates, they are actually applicable and give explicit nontrivial results in concrete situations of interest.

We are very grateful to the anonymous referee for perspicacious comments which helped us to improve this paper greatly.

\section{Homological presentations of a monad are equivalent to replete reflective subcategories of its Eilenberg-Moore category.}

\subsection{Preliminary definitions.}

Throughout this paper, when $T$ is a monad, when convenient we will 
sometimes also write $T$ for the underlying functor of the monad.

\begin{definition}
Let $\mathcal{C}$ be a category, $T$ a monad on $\mathcal{C}$. If $\mathcal{D}$ is a category equipped with a functor $F^{\prime}: \mathcal{C}\rightarrow \mathcal{D}$
and a right adjoint $G^{\prime}$ for $F^{\prime}$ such that the associated monad $G^{\prime}F^{\prime}$ is equal to $T$, we call the data $(\mathcal{D},F^{\prime},G^{\prime})$ a {\em presentation of $T$.}
Sometimes we shall just write $\mathcal{D}$ as shorthand for $(\mathcal{D},F^{\prime},G^{\prime})$, when $F^{\prime},G^{\prime}$ are clear from context.

The collection of all presentations of $T$ forms a large category, whose morphisms are morphisms of adjunctions (see IV.7 of \cite{MR1712872} for the definition
of morphisms of adjunctions) which are the identity on $\mathcal{C}$. We call this large category {\em the category of presentations of $T$}, and for which we will write $\Pres(T)$.
One also can consider natural transformations between morphisms of adjunctions, and we regard two morphisms as {\em homotopic} if there exists an invertible-up-to-isomorphism natural transformation between them; 
we will write $\Ho(\Pres(T))$ for the category of presentations of $T$ but whose morphisms are homotopy classes of morphisms of adjunctions.
\end{definition}
We note that $\Pres(T)$ and $\Ho(\Pres(T))$ are not necessarily categories, but are {\em large categories}, because their hom-collections are not necessarily hom-sets. 
The notion of a category
of presentations of a monad appears in VI.5 of \cite{MR1712872}, but was not there given a name.

For the purposes of this paper we will mostly be studying presentations of a monad
which have the property that there is some minimal degree of compatibility
between the category and the monad, 
enough compatibility to guarantee that e.g. some constructions
in homological algebra can be made. Here is our definition:
\begin{definition}\label{def of homological}
Let $\mathcal{C}$ be a category, $T$ a monad, $(\mathcal{D},F^{\prime},G^{\prime})$ a presentation of $T$. We will say that $\mathcal{D}$ is a {\em homological presentation of $T$}
if, for every $T$-algebra $\rho: G^{\prime} F^{\prime} X \rightarrow X$, 
%for every object $X$ of $\mathcal{D}$, the coequalizer of the two natural counit maps  \begin{equation}\label{counit maps} \epsilon_{F^{\prime}G^{\prime}X}, F^{\prime}G^{\prime}\epsilon_X: F^{\prime}G^{\prime}F^{\prime}G^{\prime}X\rightarrow F^{\prime}G^{\prime}X \end{equation}
the coequalizer (in $\mathcal{D})$ of the two natural counit maps  
\begin{equation}\label{counit maps} \epsilon_{F^{\prime}X}, F^{\prime}\rho: F^{\prime}G^{\prime}F^{\prime}X\rightarrow F^{\prime}X \end{equation}
exists, and for each object $Y$ of $\mathcal{D}$, the canonical map
\begin{equation}\label{coeq condition} \coeq\{ \epsilon_{F^{\prime}G^{\prime}Y}, F^{\prime}G^{\prime}\epsilon_Y\} \rightarrow Y\end{equation}
is an isomorphism. 
We will write $\HPres(T)$ for the full large subcategory of $\Pres(T)$ generated by the 
homological presentations.

%Clearly the property of being homological is preserved under natural equivalence of presentations, so 
We will write $\Ho(\HPres(T))$ for the 
large category whose objects are homological presentations of $T$, and whose morphisms are homotopy classes of morphisms of adjunctions.
\end{definition}

\begin{remark}\label{remark on def of homological}
The reason for the name ``homological'' for this kind of presentation is the following: if $(\mathcal{D},F^{\prime},G^{\prime})$ is a homological presentation of a monad and 
$\mathcal{D}$ is abelian, then each object $X$ in $\mathcal{D}$ admits a canonical resolution
\begin{equation}\label{bar res} 0 \leftarrow X \leftarrow F^{\prime}G^{\prime}X \stackrel{d_0}{\longleftarrow} F^{\prime}G^{\prime}\ker (\epsilon_{F^{\prime}G^{\prime}X}-F^{\prime}G^{\prime}\epsilon_X) \stackrel{d_1}{\longleftarrow} \dots \end{equation}
obtained by repeatedly applying $F^{\prime}G^{\prime}$, forming the coequalizer \ref{coeq condition}, and taking the kernel of the coequalizer map.
This resolution gives us a way to compute the left-derived functors of any functor on $\mathcal{D}$ which is acyclic on every object of the form $F^{\prime}G^{\prime}X$.
If $(\mathcal{D},F^{\prime},G^{\prime})$ fails to be homological, then at least for some objects $X$ the chain complex~\ref{bar res} fails to be exact and hence
cannot be used to compute derived functors in this way.

The resolution~\ref{bar res} is very familiar and commonplace in its various special cases. 
For example, when $\mathcal{C}$ is the category of sets and $T$ the monad given on a set $S$ 
by taking the underlying set of the free abelian group generated by $S$, then the category $\Ab$ is a presentation for $T$, and it is homological 
(because it is the terminal presentation, i.e., the Eilenberg-Moore category of $T$, which in Corollary~\ref{cor on e-m adjunction being homological} 
we prove is always homological for
any coequalizable monad $T$). The resolution~\ref{bar res} is the elementary resolution one uses in a first course in homological algebra to prove that free resolutions exist
in the category of abelian groups: given an abelian group $X$, one can form the direct sum $\oplus_{x\in X} \mathbb{Z}$, one can let $X_0$ be the kernel of the
obvious surjection $\oplus_{x\in X} \mathbb{Z}\rightarrow X$, then iterate to form a free resolution of $X$. 

When $\mathcal{D}$ fails to be abelian, instead of~\ref{bar res} one forms the simplicial resolution
\[ \xymatrix{ \dots \ar[r] & F^{\prime}G^{\prime}F^{\prime}G^{\prime}F^{\prime}G^{\prime}X \ar[r] & F^{\prime}G^{\prime}F^{\prime}G^{\prime}X \ar[r] & F^{\prime}G^{\prime}X }\]
of $X$, and one can use this resolution to compute more general kinds of derived functors (e.g. if $\mathcal{D}$ has the structure of a model category). 
In every case the condition that $(\mathcal{D},F^{\prime},G^{\prime})$ is homological is really the condition that $F^{\prime},G^{\prime}$ gives us a way to form a canonical resolution 
of any object in $\mathcal{D}$. In some sense one should think of a homological presentation for a monad $T$ as {\em a category equipped with a way of forming
a canonical resolution of any object by $T$-free objects,} and that means that this paper is in some sense really about classifying various ways of forming
canonical resolutions.

Finally, one more note about the map~\ref{coeq condition}: after applying $G^{\prime}$, the map always becomes an isomorphism, because the cofork
\[\xymatrix{ 
G^{\prime}F^{\prime}G^{\prime}F^{\prime}G^{\prime}X \ar@<1ex>[r]^{G^{\prime}\epsilon_{F^{\prime}G^{\prime}X}} \ar@<-1ex>[r]_{G^{\prime}F^{\prime}G^{\prime}\epsilon_X} & G^{\prime}F^{\prime}G^{\prime}X \ar[r] & G^{\prime}X
}\]
is always split by the unit map $\eta_{G^{\prime}X}: G^{\prime}X\rightarrow G^{\prime}F^{\prime}G^{\prime}X$, hence the cofork is a split coequalizer. But the map~\ref{coeq condition} can fail to
be an isomorphism {\em before} applying $G^{\prime}$.
\end{remark}

Informally, the general trend is that monads tend to either have a single homological presentation, up to equivalence, or a truly enormous collection of homological presentations, big enough to make it very difficult to explicitly classify them. However, even within the homological presentations, there is an even more restricted class of presentations of a monad which we can reasonably restrict our attention to, namely, the {\em finitary} homological presentations:
\begin{definition}
Let $\mathcal{C}$ be a category, $T$ a monad, $(\mathcal{D},F^{\prime},G^{\prime})$ a presentation of $T$. We will say that $\mathcal{D}$ is a {\em finitary} presentation of $T$ if $G^{\prime}$ preserves all filtered colimits which exist in $\mathcal{D}$.

We will write $\Fin\Pres(T)$ for the large category of finitary presentations of $T$, $\Fin\HPres(T)$ for the large category of finitary homological presentations, and $\Ho(\Fin\HPres(T))$ for the large category of finitary homological presentations up to natural equivalence.
\end{definition}
\begin{remark}\label{remark on finitariness}
In the category of modules over a ring, every object is a filtered colimit of finitely generated modules; consequently, when $(\mathcal{D},F^{\prime},G^{\prime})$ is a finitary presentation and $\mathcal{D}$ a category of modules over a ring, then $G^{\prime}$ can be computed on any object if one knows how to compute $G^{\prime}$ on finitely generated modules. This is actually quite useful; see the proof of Theorem~\ref{main dd corollary}, for example.
\end{remark}

\begin{remark}
It is {\em not} in general true that every presentation of a monad $T$ is finitary, even if $T$ itself preserves filtered colimits; for example, let $\mathcal{C}$ be the category of sets, let $T$ be the monad which sends a set $S$ to the underlying set of the free abelian group generated by $S$, and let $(\mathcal{D},F^{\prime},G^{\prime})$ be the presentation for $T$ in which $\mathcal{D}$ is the category of {\em reduced} abelian groups, $F^{\prime}: \mathcal{C}\rightarrow\mathcal{D}$ is the free abelian group functor, and $G^{\prime}: \mathcal{D} \rightarrow \mathcal{D}$ is the forgetful functor. Then, for any prime number $p$, 
the colimit of the filtered diagram
\[ \mathbb{Z} \stackrel{p}{\longrightarrow} \mathbb{Z} \stackrel{p}{\longrightarrow} \dots \]
in $\mathcal{D}$ is zero, but the colimit of 
\[ G^{\prime}(\mathbb{Z}) \stackrel{G^{\prime}(p)}{\longrightarrow} G^{\prime}(\mathbb{Z}) \stackrel{G^{\prime}(p)}{\longrightarrow} \dots \]
in $\mathcal{C}$ is the underlying set of $\mathbb{Z}[\frac{1}{p}]$.
\end{remark}

Recall that a subcategory is said to be {\em replete} if it contains every object isomorphic to one of its own objects,
and {\em reflective} if it a full subcategory and the inclusion of the subcategory admits a left adjoint.
We include fullness as part of the definition of a reflective subcategory; this seems to be relatively standard, although not universal
(see e.g.~\cite{MR1712872}), in the literature.
%\begin{definition} If $\mathcal{C}$ is a category, by a {\em replete reflective subcategory} of $\mathcal{C}$ we shall mean a reflective replete subcategory of $\mathcal{C}$. \end{definition} Note that this definition of localization is substantially weaker than the notion of ``localizing subcategory'' which is already in the literature!

\begin{definition}
Let $\mathcal{C}$ be a category, $T$ a monad, $\mathcal{C}^T$ the Eilenberg-Moore category of $T$-algebras. If $\mathcal{D}$ is a replete reflective subcategory
of $\mathcal{C}^T$, we will say that $\mathcal{D}$ {\em presents $T$} if $\mathcal{D}$ contains all the free $T$-algebras, i.e., if $\mathcal{D}$
contains the $T$-algebra $TTX\stackrel{\mu_X}{\longrightarrow} TX$ for every object $X$ of $\mathcal{C}$.

We will write $\Loc(T)$ for the 
partially-ordered collection of all replete reflective subcategories $\mathcal{C}^T$ which present $T$.

Finally, we will say that an element $\mathcal{D}$ of $\Loc(T)$ is {\em finitary} if the forgetful functor from $\mathcal{D}$ to $\mathcal{C}^T$ preserves all filtered colimits which exist in $\mathcal{D}$. We will write $\Fin\Loc(T)$ for the subcollection of $\Loc(T)$ consisting of the finitary elements.
\end{definition}
The notation ``$\Loc(T)$'' is motivated by the notion that a replete reflective subcategory of a category is, speaking intuitively and roughly, a kind of ``localization'' of that category.

We note that $\Loc(T)$ is not necessarily a set, {\em nor even a class} (we are grateful to Mike Shulman for pointing out to us that the collection of
subcategories of a category is not necessarily a class!). Sometimes the term ``conglomerate'' is used for a collection too large to form a class. 
In other words, if one wants to use Grothendieck universes, one must expand the universe {\em twice} to go from sets to conglomerates.
In practical algebraic, geometric, and topological situations, however, it seems likely that $\Loc(T)$ will form a set. For example, see Corollary~\ref{cor on hpres being a poset}, where we show that mild conditions (a biproduct condition, a Krull-Schmidt condition, and a smallness condition) imply that $\Loc(T)$ is a set.

Finally, it will sometimes be convenient to have coequalizers in Eilenberg-Moore categories. We introduce a definition which describes
monads which have this agreeable property:
\begin{definition}\label{def of coequalizable}
Let $\mathcal{C}$ be a category, $T$ a monad, $\mathcal{C}^T$ the Eilenberg-Moore category of $T$-algebras. 
We will say that $T$ is {\em coequalizable} if $\mathcal{C}^T$ has coequalizers.
\end{definition}

\begin{remark}\label{remark on coequalizability}
There are many known conditions on $T$ which guarantee that $T$ is coequalizable; for example, in Lemma II.6.6 in \cite{MR1417719} it is shown
that, if $T$ preserves reflexive coequalizers, then $T$ is coequalizable. Consequently, many interesting examples of monads $T$ are coequalizable.

For example, suppose $R \rightarrow S$ is a map of commutative rings. Then the base-change monad $T: \Mod(R) \rightarrow \Mod(R)$, i.e., the composite of 
the extension of scalars functor $\Mod(R)\rightarrow \Mod(S)$ with the restriction of scalars functor $\Mod(S)\rightarrow\Mod(R)$,
is coequalizable, since extension of scalars and restriction of scalars are both right exact, preserving all coequalizers. If $S$ is finitely generated
as an $R$-module then the base-change monad $\fgMod(R)\rightarrow\fgMod(R)$ on the finitely generated module category is also coequalizable, for the same
reason. Then the Eilenberg-Moore category $\Mod(R)^T$ is equivalent to $\Mod(S)$.

More generally, if $f: Y\rightarrow X$ is a map of schemes and $\QC\Mod(\mathcal{O}_X)$ the category of quasicoherent $\mathcal{O}_X$-modules,
then the base-change monad $f_*f^*$ is coequalizable if $f$ is an affine morphism, since in that case $f_*$ is right exact (and $f^*$ is always right exact, regardless
of whether $f$ is affine).
Then the Eilenberg-Moore category $\QC\Mod(\mathcal{O}_X)^{f_*f^*}$ is equivalent to $\QC\Mod(\mathcal{O}_Y)$, by the results of EGA II.1.4, \cite{MR0163909}.

Usually (e.g. in the examples above, and in
our Theorem~\ref{main dd corollary}) we will have an explicit description of the category $\mathcal{C}^T$ and we will
know that it has coequalizers; what will be interesting and new will be the description of the {\em rest} of $\HPres(T)$.
\end{remark}

\begin{definition}\label{def of weakly k-s}
If $\mathcal{C}$ is a category with coproduct $\oplus$, we say that an object $X$ of $\mathcal{C}$ is {\em indecomposable} if $X\cong Y\oplus Z$
implies either $Y\cong 0$ or $Z\cong 0$. We say that $\mathcal{C}$ is {\em weakly Krull-Schmidt} if every object $X$ of $\mathcal{C}$ admits a decomposition
into a finite coproduct of indecomposable objects, and that decomposition is unique up to permutation of the summands.
\end{definition}
Definition~\ref{def of weakly k-s} differs from the usual definition of a Krull-Schmidt category in that we do not require the indecomposable objects to have local endomorphism rings.

\subsection{Replete reflective subcategories presenting a monad are equivalent to its homological presentations.}

\begin{lemma}\label{main lemma 1}
Let $\mathcal{C}$ be a category, $T$ a monad on $\mathcal{C}$, $(\mathcal{D},F^{\prime},G^{\prime})$ 
a presentation of $T$. If $\mathcal{D}$ has coequalizers of all pairs of 
maps of the form~\ref{counit maps}, then 
the canonical comparison functor
$K: \mathcal{D}\rightarrow \mathcal{C}^T$ has a left adjoint.
Conversely, if $T$ is coequalizable and $K$ is full and faithful and has a left adjoint, then $\mathcal{D}$ has all coequalizers (and in particular, all pairs of maps
of the form~\ref{counit maps}).
\end{lemma}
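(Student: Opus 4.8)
The plan is to treat the two implications separately: in the forward direction I would construct a left adjoint to the comparison functor $K$ by an explicit coequalizer formula, and in the converse I would recognize $\mathcal{D}$ as (equivalent to) a reflective subcategory of $\mathcal{C}^T$ and invoke the fact that reflective subcategories inherit colimits from the ambient category.

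For the forward direction, recall that $K$ sends an object $Y$ of $\mathcal{D}$ to the $T$-algebra $K(Y) = (G^{\prime}Y, G^{\prime}\epsilon_Y)$. I would define a candidate left adjoint $L$ on a $T$-algebra $\rho\colon G^{\prime}F^{\prime}X \to X$ by setting $L(X,\rho) = \coeq\{\epsilon_{F^{\prime}X}, F^{\prime}\rho\colon F^{\prime}G^{\prime}F^{\prime}X \to F^{\prime}X\}$, which exists precisely by the hypothesis that $\mathcal{D}$ has coequalizers of all pairs of the form~\ref{counit maps}; functoriality of $L$ is then forced by the universal property of coequalizers. To exhibit the adjunction $L \dashv K$, I would take a map $f\colon F^{\prime}X \to Y$ and transpose it across $F^{\prime} \dashv G^{\prime}$ to a map $g\colon X \to G^{\prime}Y$, and check that $f$ coequalizes the pair $(\epsilon_{F^{\prime}X}, F^{\prime}\rho)$ if and only if $g$ is a morphism of $T$-algebras $(X,\rho) \to K(Y)$. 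A short diagram chase using the triangle identities and the naturality of the unit and counit shows that the transpose of $f\circ\epsilon_{F^{\prime}X}$ is $G^{\prime}\epsilon_Y\circ Tg$ while the transpose of $f\circ F^{\prime}\rho$ is $g\circ\rho$, and the equality of these two is exactly the $T$-algebra homomorphism condition on $g$. This produces a bijection $\mathcal{D}(L(X,\rho),Y) \cong \mathcal{C}^T((X,\rho),K(Y))$, natural in both variables, so $K$ has the left adjoint $L$.

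For the converse, assume $T$ is coequalizable and that $K$ is full and faithful with a left adjoint $L$. Since a right adjoint that is full and faithful exhibits its source as a reflective subcategory of its target (equivalently, the counit $LK \to \mathrm{id}_{\mathcal{D}}$ is a natural isomorphism), $K$ realizes $\mathcal{D}$ as a reflective subcategory of $\mathcal{C}^T$ with reflector $L$. I would then use the standard fact that such a subcategory inherits every colimit that exists in the ambient category. Concretely, given a parallel pair $a,b\colon Y_1 \to Y_2$ in $\mathcal{D}$, I would form the coequalizer $c$ of $Ka, Kb$ in $\mathcal{C}^T$ (which exists because $T$ is coequalizable) and verify that $Lc$ is the coequalizer of $a,b$ in $\mathcal{D}$. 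The verification runs through the natural bijections $\mathcal{D}(Lc,Y) \cong \mathcal{C}^T(c,KY) \cong \{\, h\colon KY_2 \to KY \mid h\circ Ka = h\circ Kb \,\} \cong \{\, k\colon Y_2 \to Y \mid k\circ a = k\circ b \,\}$, using respectively $L\dashv K$, the universal property of $c$, and the fullness and faithfulness of $K$; the last set is precisely the set of maps out of a coequalizer of $a,b$, so that coequalizer exists and equals $Lc$. As $a,b$ were arbitrary, $\mathcal{D}$ has all coequalizers.

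The step I would be most careful about is the transposition computation in the forward direction: tracking exactly which triangle identity and which naturality square convert the two transposed composites into $G^{\prime}\epsilon_Y\circ Tg$ and $g\circ\rho$, and confirming that the resulting bijection is natural in $(X,\rho)$ as well as in $Y$. The converse is essentially formal once the reflective-subcategory picture is identified, the only point to keep in mind being that ``coequalizable'' supplies exactly the coequalizers in $\mathcal{C}^T$ that one reflects down into $\mathcal{D}$.
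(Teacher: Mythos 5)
Your proposal is correct and follows essentially the same route as the paper: the left adjoint is Beck's explicit construction $L(X,\rho)=\coeq\{\epsilon_{F^{\prime}X},F^{\prime}\rho\}$ (you supply the transposition details the paper delegates to Beck's thesis and the exercise in Mac~Lane), and the converse is the same reflective-subcategory argument, computing coequalizers in $\mathcal{C}^T$ and applying the reflector, which the paper phrases via ``left adjoints preserve colimits'' together with $VK\cong\id_{\mathcal{D}}$. No gaps.
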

\begin{proof}
We will write $F: \mathcal{C}\rightarrow\mathcal{C}^T$ for the canonical functor and $G$ for its right adjoint.
When $\mathcal{D}$ has coequalizers of all parallel pairs of the form~\ref{counit maps}, the comparison functor $K$ admits a left adjoint $V$, defined
on objects as follows: if $TX\stackrel{\rho_X}{\longrightarrow} X$ is the structure map of a $T$-algebra, then $V$ applied to that $T$-algebra is
the coequalizer of the maps 
\[ F^{\prime} \rho_X,\epsilon_{F^{\prime}}X: F^{\prime}G^{\prime}F^{\prime} X \rightarrow F^{\prime}X,\]
using the fact that $G^{\prime}F^{\prime} = GF = T$.
(The result that $K$ has a left adjoint if $\mathcal{D}$ has coequalizers is an old one: it appears in Beck's thesis \cite{beckthesis}, and even appears as an exercise in VI.7 of \cite{MR1712872}. But the only coequalizers one actually needs
are the ones used in the construction of the left adjoint, i.e., those of 
the form~\ref{counit maps}.)

For the converse: suppose $T$ is coequalizable and $K$ is full and faithful and has a left adjoint $V$. Since left adjoints preserve colimits and since fullness and faithfulness of $K$ is equivalent to $VK\cong \id_{\mathcal{D}}$, we can compute
the coequalizer of any pair $f,g: X\rightarrow Y$ in $\mathcal{D}$ by computing the coequalizer of $Kf,Kg$ in $\mathcal{C}^T$ (which exists since $T$ is coequalizable)
and then applying $V$.
\end{proof}

\begin{lemma}\label{main lemma 2}
Let $\mathcal{C}$ be a category, $T$ a monad on $\mathcal{C}$, $(\mathcal{D},F^{\prime},G^{\prime})$ 
a presentation of $T$. Suppose $\mathcal{D}$ has coequalizers of all pairs of maps
of the form~\ref{counit maps}. 
Then the comparison functor $K: \mathcal{D}\rightarrow \mathcal{C}^T$ is full and faithful if and only if 
$(\mathcal{D},F^{\prime},G^{\prime})$ is homological.
\end{lemma}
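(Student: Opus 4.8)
The plan is to realize the canonical map~\ref{coeq condition} as the counit of the adjunction supplied by Lemma~\ref{main lemma 1}, and then to invoke the standard characterization of fully faithful right adjoints.

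First I would observe that, under the standing hypothesis that $\mathcal{D}$ has coequalizers of all pairs of the form~\ref{counit maps}, the first clause of Definition~\ref{def of homological} (the existence of these coequalizers) is literally the hypothesis, hence automatic; so being homological reduces to the single requirement that for every object $Y$ of $\mathcal{D}$ the canonical map~\ref{coeq condition} is an isomorphism. Here I would note that the pair $\epsilon_{F^{\prime}G^{\prime}Y}, F^{\prime}G^{\prime}\epsilon_Y: F^{\prime}G^{\prime}F^{\prime}G^{\prime}Y \to F^{\prime}G^{\prime}Y$ is exactly a pair of the form~\ref{counit maps}, namely the one obtained from the $T$-algebra $(G^{\prime}Y, G^{\prime}\epsilon_Y) = KY$; consequently its coequalizer is, by the explicit construction in Lemma~\ref{main lemma 1}, precisely $VKY$, where $V$ is the left adjoint to $K$ produced there.

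The key step, which I expect to be the main (though not conceptually deep) obstacle, is to identify the canonical map~\ref{coeq condition} with the counit $\varepsilon_Y\colon VKY \to Y$ of the adjunction $V \dashv K$. I would do this by unwinding the defining natural isomorphism $\mathcal{D}(V(X,\rho), Y) \cong \mathcal{C}^T((X,\rho), KY)$ from the proof of Lemma~\ref{main lemma 1}: a map out of the coequalizer $VKY \to Y$ is the same as a map $h\colon F^{\prime}G^{\prime}Y \to Y$ coequalizing the pair, which by the adjunction $F^{\prime} \dashv G^{\prime}$ corresponds to a morphism $G^{\prime}Y \to G^{\prime}Y$ in $\mathcal{C}$, the coequalizing condition being exactly the statement that this morphism respects the $T$-algebra structures, i.e. is a map $KY \to KY$. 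Tracing the identity $T$-algebra homomorphism through this correspondence shows that $\varepsilon_Y$ is the map induced on the coequalizer by the counit $\epsilon_Y\colon F^{\prime}G^{\prime}Y \to Y$ of $F^{\prime} \dashv G^{\prime}$, which is precisely the canonical map~\ref{coeq condition}. This is just careful bookkeeping with the two adjunctions and the universal property of the coequalizer.

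Finally, I would conclude by appealing to the standard fact that a right adjoint is full and faithful if and only if the counit of the adjunction is a natural isomorphism. Applied to $V \dashv K$, this says that $K$ is full and faithful if and only if $\varepsilon_Y$ is an isomorphism for every $Y$, which by the identification above holds if and only if the map~\ref{coeq condition} is an isomorphism for every $Y$, that is, if and only if $(\mathcal{D}, F^{\prime}, G^{\prime})$ is homological.
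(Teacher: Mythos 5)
Your proposal is correct and follows essentially the same route as the paper's own proof: both reduce the statement to the standard fact that a right adjoint $K$ is full and faithful if and only if the counit of $V \dashv K$ is an isomorphism, and both identify $VKY$ with the coequalizer of the pair $\epsilon_{F^{\prime}G^{\prime}Y}, F^{\prime}G^{\prime}\epsilon_Y$ and the counit $VKY \to Y$ with the canonical map~\ref{coeq condition}. The only difference is that you spell out the bookkeeping verifying this last identification, which the paper simply asserts.
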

\begin{proof}
We use the same notation as in the proof of Lemma~\ref{main lemma 1}.
That $K$ is full and faithful is equivalent to the counit map $VK \rightarrow \id_{\mathcal{D}}$ of the adjunction being an isomorphism.
We recall that $K$ is defined on objects by letting $KX$ be the $T$-algebra with structure map $G^{\prime}F^{\prime}G^{\prime}X = TG^{\prime}X \rightarrow G^{\prime}X$
given by the counit natural transformation $F^{\prime}G^{\prime}\rightarrow \id_{\mathcal{D}}$. Now $VKX$ is precisely the coequalizer of the two maps
\[ \epsilon_{F^{\prime}G^{\prime} X}, F^{\prime}G^{\prime}\epsilon_X: F^{\prime}G^{\prime}F^{\prime}G^{\prime} X\rightarrow F^{\prime}G^{\prime} X,\]
and the map $VKX\rightarrow X$ is precisely the map~\ref{coeq condition}. So the condition that $(\mathcal{D},F^{\prime},G^{\prime})$ be homological
is equivalent to the condition that $VK\rightarrow\id_{\mathcal{D}}$ be an isomorphism of functors, i.e., the condition that $K$ be full and faithful.
\end{proof}

\begin{theorem}\label{main thm}
Let $\mathcal{C}$ be a category, $T$ a coequalizable 
monad on $\mathcal{C}$. Then the large homotopy category $\Ho(\HPres(T))$ of homological presentations of $T$ is equivalent to the
partially-ordered collection $\Loc(T)$ of replete reflective subcategories of $\mathcal{C}^T$ which present $T$. 

Furthermore, if the forgetful functor $\mathcal{C}^T \rightarrow \mathcal{C}$ preserves filtered colimits,
then this equivalence restricts to an equivalence between 
the subcollection $\Ho(\Fin\HPres(T))$ of $\Ho(\HPres(T))$
and the subcollection $\Fin\Loc(T)$ of $\Loc(T)$.
\end{theorem}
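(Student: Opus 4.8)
The plan is to construct a functor $\Phi\colon \Ho(\HPres(T)) \to \Loc(T)$ together with an inverse, using the comparison functor as the central device. Write $F\colon\mathcal{C}\to\mathcal{C}^T$ for the canonical (free) functor and $G$ for its right adjoint, as in the proof of Lemma~\ref{main lemma 1}. On objects, $\Phi$ sends a homological presentation $(\mathcal{D},F',G')$ to the essential image $\mathcal{K}$ of its comparison functor $K\colon\mathcal{D}\to\mathcal{C}^T$. By the definition of a homological presentation the coequalizers of the pairs of maps of the form~\ref{counit maps} exist in $\mathcal{D}$, so Lemma~\ref{main lemma 1} equips $K$ with a left adjoint and Lemma~\ref{main lemma 2} makes $K$ full and faithful. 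A full and faithful functor with a left adjoint identifies $\mathcal{D}$ with its essential image, so $\mathcal{K}$ is a replete reflective subcategory of $\mathcal{C}^T$; and since $KF' = F$, the subcategory $\mathcal{K}$ contains every free algebra $FX$ and therefore presents $T$. Thus $\mathcal{K}$ is a legitimate object of $\Loc(T)$.

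For the inverse $\Psi\colon \Loc(T)\to\Ho(\HPres(T))$, I would send a replete reflective $\mathcal{D}'\subseteq\mathcal{C}^T$ that presents $T$ to the presentation whose category is $\mathcal{D}'$, whose left adjoint is the corestriction of $F$ (which lands in $\mathcal{D}'$ precisely because $\mathcal{D}'$ contains the free algebras), and whose right adjoint is the restriction $G|_{\mathcal{D}'}$; fullness of $\mathcal{D}'$ makes these adjoint, and $G|_{\mathcal{D}'}\circ F = GF$ equals $T$ as a monad. This presentation is homological: its comparison functor is the inclusion $\mathcal{D}'\hookrightarrow\mathcal{C}^T$, which is full and faithful, and the coequalizers of pairs of the form~\ref{counit maps} exist because a replete reflective subcategory of the coequalizable category $\mathcal{C}^T$ again has all coequalizers (form them in $\mathcal{C}^T$ and apply the reflector), so Lemma~\ref{main lemma 2} applies. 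The composite $\Phi\Psi$ is the identity, since the essential image of the inclusion of a replete subcategory is that subcategory again; and $\Psi\Phi$ is naturally equivalent to the identity by way of the equivalence $K\colon\mathcal{D}\stackrel{\sim}{\longrightarrow}\mathcal{K}$, which intertwines the adjunction data.

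The hard part, and the step deserving the most care, is matching morphisms, since this is exactly what forces $\Ho(\HPres(T))$ to be a poset. Given a morphism of adjunctions $L\colon\mathcal{D}_1\to\mathcal{D}_2$ that is the identity on $\mathcal{C}$, the defining compatibilities $LF_1' = F_2'$ and $G_2'L = G_1'$, together with the compatibility of counits, show that the comparison functors intertwine: $K_2L = K_1$. Passing to essential images gives $\mathcal{K}_1\subseteq\mathcal{K}_2$, so $\Phi$ carries morphisms to inclusions; conversely an inclusion $\mathcal{D}_1'\subseteq\mathcal{D}_2'$ in $\Loc(T)$ is itself a morphism of adjunctions between the presentations $\Psi(\mathcal{D}_1')$ and $\Psi(\mathcal{D}_2')$. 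Uniqueness up to homotopy is the delicate point: if $L,L'\colon\mathcal{D}_1\to\mathcal{D}_2$ are two such morphisms, then $K_2L = K_1 = K_2L'$, and because $K_2$ is full and faithful the identity of $K_2LY = K_2L'Y$ lifts to a unique isomorphism $LY\to L'Y$, natural in $Y$; this invertible natural transformation is a homotopy from $L$ to $L'$. Hence there is at most one morphism between any two objects of $\Ho(\HPres(T))$, and it exists precisely when the associated subcategories are nested, which together with the object correspondence shows that $\Phi$ is an equivalence onto the poset $\Loc(T)$.

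Finally, for the finitary refinement I would verify that, under the identification $\mathcal{D}\simeq\mathcal{K}$, the two finitariness conditions coincide. Writing $G' = (G\circ i)\circ\bar{K}$, where $\bar{K}\colon\mathcal{D}\stackrel{\sim}{\longrightarrow}\mathcal{K}$ is the equivalence and $i\colon\mathcal{K}\hookrightarrow\mathcal{C}^T$ the inclusion, and using that $\bar{K}$ preserves and reflects filtered colimits along with their existence, finitariness of $(\mathcal{D},F',G')$ amounts to $G\circ i$ preserving the filtered colimits existing in $\mathcal{K}$, while finitariness of $\mathcal{K}$ in $\Loc(T)$ is the assertion that $i$ does. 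These coincide under the hypothesis that $G\colon\mathcal{C}^T\to\mathcal{C}$ preserves filtered colimits: because the forgetful functor of an Eilenberg--Moore category creates every colimit preserved by its monad, and $T = GF$ preserves filtered colimits here, $G$ creates filtered colimits; so for a filtered diagram $(X_\alpha)$ whose colimit $X$ exists in $\mathcal{K}$, the colimit of the $i(X_\alpha)$ exists in $\mathcal{C}^T$, and the canonical comparison map to $i(X)$ becomes an isomorphism after applying $G$ and is hence already an isomorphism, as $G$ is conservative. This proves that $i$ preserves such a colimit exactly when $G\circ i$ does, so $\Phi$ and $\Psi$ restrict to the finitary subcollections and give the asserted equivalence $\Ho(\Fin\HPres(T))\simeq\Fin\Loc(T)$.
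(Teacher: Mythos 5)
Your proposal is correct and follows essentially the same route as the paper's own proof: the comparison functor $K$ together with Lemmas~\ref{main lemma 1} and~\ref{main lemma 2} in one direction, and restricting/corestricting the Eilenberg--Moore adjunction $F\dashv G$ to a replete reflective subcategory containing the free algebras in the other, with the identical finitariness argument via $G'=G\circ K$ and conservativity of $G$. If anything, you supply details the paper leaves implicit---the strict identity $K_2L=K_1$ forcing at most one morphism up to homotopy (hence the poset structure), and the creation of filtered colimits in $\mathcal{C}^T$ guaranteeing that $\colim K(\mathcal{X})$ exists---so your write-up is a slightly more careful rendering of the same argument.
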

\begin{proof}
We write $\Kl(T)$ for the Kleisli category of $T$, we write $F:\mathcal{C}\rightarrow\mathcal{C}^T$ for the canonical functor and $G$ for its right adjoint, and we write
$F^{\prime\prime}: \mathcal{C}\rightarrow \Kl(T)$ for the canonical functor
and $G^{\prime\prime}$ for its right adjoint. 
The theorem 
follows almost immediately from Lemma~\ref{main lemma 2}, which gives us that every homological presentation $(\mathcal{D},F^{\prime},G^{\prime})$ of $T$ has the
property that $K$ is faithful and full, hence $\mathcal{D}$ is canonically equivalent to a full replete subcategory of $\mathcal{C}^T$,
and Lemma~\ref{main lemma 1}, which gives us that that full replete subcategory is reflective.
That reflective replete subcategory contains the free $T$-algebras, i.e.,
the Kleisli category of $T$, since the Kleisli category is initial among
presentations of $T$. So that reflective replete subcategory of 
$\mathcal{C}^T$ is an element of $\Loc(T)$. If we furthermore assume that $G$ preserves filtered colimits and
$(\mathcal{D},F^{\prime},G^{\prime})$ is finitary, then since $G^{\prime} = G\circ K$, for any filtered diagram $\mathcal{X}$ in $\mathcal{D}$,
we have the natural commutative diagram
\[\xymatrix{
 \colim G^{\prime}(\mathcal{X}) \ar[rr]^{\cong} \ar[rd]^{\cong} & & G^{\prime}(\colim \mathcal{X}) \\
 & G(\colim K(\mathcal{X})), \ar[ru] &
}\]
so the map $G(\colim K(\mathcal{X})) \rightarrow G^{\prime}(\colim \mathcal{X}) = G(K(\colim \mathcal{X}))$ is an isomorphism, and since $G$ reflects isomorphisms,
$K$ preserves filtered colimits.

Conversely, if $\mathcal{D}$ is a reflective replete subcategory of $\mathcal{C}^T$ with inclusion
$K: \mathcal{D}\rightarrow\mathcal{C}^T$ having left adjoint $V$, suppose we write
$S: \Kl(T)\rightarrow \mathcal{D}$ for the inclusion of the free $T$-algebras.
We claim that the composite $S\circ F^{\prime\prime}: \mathcal{C}\rightarrow
\mathcal{D}$ has right adjoint $G\circ K: \mathcal{D}\rightarrow\mathcal{C}$,
and that the composite monad $G\circ K \circ S \circ F^{\prime\prime}$ 
is equal to the monad $T$. The second claim is very easy: the composite
$G \circ K \circ S$ is equal to $G^{\prime\prime}$, so 
\[ G\circ K \circ S \circ F^{\prime\prime} = 
G^{\prime\prime}\circ F^{\prime\prime} = T.\]
The first claim is also not difficult: since $F = K\circ S \circ F^{\prime\prime}$ and $V\circ K \simeq \id_{\mathcal{D}}$,
we have 
\[ V\circ F \simeq V\circ K\circ S\circ F^{\prime\prime} \simeq S \circ F^{\prime\prime}.\]
Now $V$ is left adjoint to $K$ and $F$ left adjoint to $G$, so 
$S\circ F^{\prime\prime}\simeq V\circ F$ is left adjoint to $G\circ K$,
proving our first claim. It follows that $(\mathcal{D},S\circ F^{\prime\prime},
G\circ K)$ is a presentation of $T$.

All that remains to be proven is that $(\mathcal{D},S\circ F^{\prime\prime}, G\circ K)$ is a {\em homological} presentation of $T$. 
By construction,  $K$ is full and faithful,
so by Lemma~\ref{main lemma 1}, $\mathcal{D}$ has coequalizers of all parallel pairs of the form~\ref{counit maps}.
So by Lemma~\ref{main lemma 2}, $(\mathcal{D},S\circ F^{\prime\prime}, G\circ K)$ is homological.
If we furthermore assume that $G$ preserves filtered colimits and that $\mathcal{D}$ is finitary, then of course the composite $G\circ K$ preserves filtered colimits, and consequently $(\mathcal{D},S\circ F^{\prime\prime}, G\circ K)$ is also a finitary homological presentation.
\end{proof}

\begin{corollary} \label{cor on e-m adjunction being homological}
If $T$ is coequalizable, the Eilenberg-Moore adjunction $(\mathcal{C}^T, F, G)$ of $T$ is a homological presentation of $T$. (And, consequently, the terminal homological presentation of $T$.)
\end{corollary}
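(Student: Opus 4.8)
The plan is to deduce this directly from Lemma~\ref{main lemma 2}, the key point being that for the Eilenberg-Moore presentation the comparison functor is literally the identity functor on $\mathcal{C}^T$. First I would observe that since $T$ is coequalizable, $\mathcal{C}^T$ has all coequalizers, and in particular it has coequalizers of every parallel pair of maps of the form~\ref{counit maps}; this is precisely the hypothesis required to apply Lemma~\ref{main lemma 2} to the presentation $(\mathcal{C}^T,F,G)$.

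Next I would unwind the definition of the comparison functor $K:\mathcal{C}^T\rightarrow\mathcal{C}^T$ attached to this presentation. Following the recipe recalled in the proof of Lemma~\ref{main lemma 2}, $K$ sends a $T$-algebra $(X,\rho: TX\rightarrow X)$ to the $T$-algebra whose underlying object is $GX = X$ and whose structure map is $G\epsilon_{(X,\rho)}$. Since the counit $\epsilon_{(X,\rho)}: (TX,\mu_X)\rightarrow (X,\rho)$ of the Eilenberg-Moore adjunction is exactly the structure map $\rho$ regarded as an algebra homomorphism, applying the forgetful functor $G$ returns $\rho$ itself, so $K(X,\rho)=(X,\rho)$ and $K$ is (naturally isomorphic to) the identity functor. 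An identity functor is trivially full and faithful, so Lemma~\ref{main lemma 2} immediately yields that $(\mathcal{C}^T,F,G)$ is a homological presentation of $T$.

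For the parenthetical assertion that this presentation is \emph{terminal} among homological presentations, I would invoke the equivalence $\Ho(\HPres(T))\simeq\Loc(T)$ of Theorem~\ref{main thm}. Under this equivalence the Eilenberg-Moore presentation corresponds to the subcategory $\mathcal{C}^T$ of itself, which is the greatest element of the partially-ordered collection $\Loc(T)$, since every replete reflective subcategory of $\mathcal{C}^T$ presenting $T$ is by definition a subcategory of $\mathcal{C}^T$. A greatest element of a poset is a terminal object, so the Eilenberg-Moore presentation is terminal in $\Ho(\HPres(T))$.

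Honestly there is no serious obstacle here; the corollary is essentially a special case of the machinery already built in Lemmas~\ref{main lemma 1} and~\ref{main lemma 2} and Theorem~\ref{main thm}. The single point deserving any care is the identification of the comparison functor with the identity, which amounts to nothing more than recalling that the counit of the Eilenberg-Moore adjunction at $(X,\rho)$ is $\rho$ and that $G$ is the forgetful functor.
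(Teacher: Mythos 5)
Your proof is correct and follows essentially the same route as the paper, which states this corollary without a separate argument precisely because it drops out of Lemma~\ref{main lemma 2} (coequalizability supplies the needed coequalizers, and the comparison functor for the Eilenberg-Moore presentation is the identity, hence full and faithful) together with Theorem~\ref{main thm} for terminality. Your one point of care---identifying $K$ with the identity via $G\epsilon_{(X,\rho)}=\rho$---is exactly the right observation and is handled correctly.
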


\begin{corollary}
If $T$ is coequalizable, the large homotopy category $\Ho(\HPres(T))$ of homological presentations of $T$ is partially-ordered, i.e., for 
any objects $\mathcal{A},\mathcal{B}$ of $\Ho(\HPres(T))$, there is at most one morphism $\mathcal{A}\rightarrow\mathcal{B}$.
If we furthermore assume that the forgetful functor $\mathcal{C}^T \rightarrow \mathcal{C}$ preserves filtered colimits, then large homotopy category $\Ho(\Fin\HPres(T))$ of finitary homological presentations of $T$ is also partially-ordered.
\end{corollary}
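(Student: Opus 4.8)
The plan is to derive this corollary directly from Theorem~\ref{main thm}, which does all of the substantive work. That theorem provides an equivalence of large categories $\Ho(\HPres(T)) \simeq \Loc(T)$ whenever $T$ is coequalizable, so it suffices to observe two things: first, that $\Loc(T)$ is partially-ordered in the sense demanded, and second, that the property of having at most one morphism between any two objects is invariant under equivalence of categories.

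The first point is essentially by construction: $\Loc(T)$ is defined as a partially-ordered collection of replete reflective subcategories of $\mathcal{C}^T$, ordered by inclusion. Viewed as a category, a partially-ordered collection has at most one morphism between any two of its objects, namely the unique inclusion when one subcategory is contained in another, and none otherwise. So for any objects $\mathcal{X},\mathcal{Y}$ of $\Loc(T)$, the hom-collection $\Loc(T)(\mathcal{X},\mathcal{Y})$ has at most one element.

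For the second point I would invoke the standard fact that any equivalence of categories is in particular fully faithful. Thus if $E\colon \Ho(\HPres(T)) \rightarrow \Loc(T)$ is the equivalence furnished by Theorem~\ref{main thm}, then for any objects $\mathcal{A},\mathcal{B}$ of $\Ho(\HPres(T))$ the induced map of hom-collections $\Ho(\HPres(T))(\mathcal{A},\mathcal{B}) \rightarrow \Loc(T)(E\mathcal{A},E\mathcal{B})$ is a bijection. Since the target has at most one element by the previous paragraph, so does the source, which is exactly the assertion that $\Ho(\HPres(T))$ is partially-ordered.

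The finitary statement is handled identically: under the additional hypothesis that the forgetful functor $\mathcal{C}^T \rightarrow \mathcal{C}$ preserves filtered colimits, Theorem~\ref{main thm} restricts to an equivalence $\Ho(\Fin\HPres(T)) \simeq \Fin\Loc(T)$, and $\Fin\Loc(T)$ is by definition a subcollection of $\Loc(T)$, hence again partially-ordered; applying the full-faithfulness argument to this restricted equivalence gives the claim. I do not anticipate a genuine obstacle, since the real content is already contained in Theorem~\ref{main thm}. The only point requiring any care is the size issue, since $\Ho(\HPres(T))$ and $\Loc(T)$ are large categories (indeed $\Loc(T)$ may be a conglomerate), but the full-faithfulness argument is insensitive to the size of the hom-collections and goes through verbatim.
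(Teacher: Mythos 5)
Your proposal is correct and is essentially the paper's own argument: the paper states this corollary without proof precisely because it follows immediately from Theorem~\ref{main thm}, via the observation that $\Loc(T)$ (and its subcollection $\Fin\Loc(T)$) is partially-ordered by definition and that an equivalence of large categories, being fully faithful, transfers the at-most-one-morphism property. Your attention to the size issue is a reasonable extra care, and as you note it causes no difficulty.
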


\begin{corollary}\label{cor on hpres being a poset}
Suppose $T$ is coequalizable and $\mathcal{C}^T$ is weakly Krull-Schmidt.
Suppose the collection of 
isomorphism classes of indecomposable objects forms a {\em set} (not a proper class!), and suppose that set has cardinality $\kappa$.
Then $\Ho(\HPres(T))$ is equivalent to a partially-ordered {\em set} of cardinality no greater than $2^{\aleph_{0}^{\kappa}}$.
\end{corollary}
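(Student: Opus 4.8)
The plan is to transport the problem across the equivalence of Theorem~\ref{main thm} and then do the cardinal arithmetic. Since $T$ is coequalizable, Theorem~\ref{main thm} supplies an equivalence $\Ho(\HPres(T)) \simeq \Loc(T)$, and $\Loc(T)$ is partially ordered by construction: its morphisms are inclusions of subcategories, and there is at most one such between any two given objects. So it suffices to show that $\Loc(T)$, which a priori is only a partially-ordered conglomerate, is in fact equivalent to a genuine partially-ordered \emph{set} of cardinality at most $2^{\aleph_0^\kappa}$. Everything reduces to cutting $\Loc(T)$ down to a set and bounding its size.

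First I would bound the number of isomorphism classes of objects of $\mathcal{C}^T$, where the weak Krull-Schmidt hypothesis does the essential work. By Definition~\ref{def of weakly k-s}, every object of $\mathcal{C}^T$ is a finite coproduct of indecomposables, and this decomposition is unique up to permutation of the summands. Hence an isomorphism class of objects is exactly the datum of a finite multiset of isomorphism classes of indecomposables, i.e.\ a finitely-supported function from the $\kappa$-element set of indecomposable isomorphism classes to $\mathbb{N}$. The number of such functions is at most the number of \emph{all} functions from a $\kappa$-element set to $\mathbb{N}$, namely $\aleph_0^\kappa$. Using the standing hypothesis that the indecomposable isomorphism classes form a set, I conclude that the isomorphism classes of objects of $\mathcal{C}^T$ genuinely form a set, of cardinality at most $\aleph_0^\kappa$.

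Next I would exploit that every element of $\Loc(T)$ is, by the paper's convention, a \emph{full} subcategory (reflectivity includes fullness) which is moreover replete. A full replete subcategory of $\mathcal{C}^T$ is determined completely by the collection of isomorphism classes of objects it contains: fullness forces its hom-sets to coincide with those of $\mathcal{C}^T$, while repleteness forces its object-collection to be a union of isomorphism classes. Thus sending a subcategory to its set of object-isomorphism-classes defines an injection of $\Loc(T)$ into the power set of the set counted in the previous paragraph; this simultaneously shows $\Loc(T)$ is a set and bounds it by $2^{\aleph_0^\kappa}$. The equivalence $\Ho(\HPres(T)) \simeq \Loc(T)$ then exhibits $\Ho(\HPres(T))$ as equivalent to a partially-ordered set of cardinality at most $2^{\aleph_0^\kappa}$. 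I expect the only real subtlety to be size bookkeeping: one must use repleteness and fullness to pass from the a-priori conglomerate $\Loc(T)$ to a genuine set \emph{before} the cardinal arithmetic is meaningful, and one must note that the deliberately crude estimate $\aleph_0^\kappa$ (counting all functions, not merely the finitely-supported ones) is precisely what lands the answer inside the stated bound.
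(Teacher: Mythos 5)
Your proposal is correct and takes essentially the same approach as the paper's own proof: both pass through the equivalence $\Ho(\HPres(T))\simeq\Loc(T)$ of Theorem~\ref{main thm}, use the weak Krull-Schmidt property to identify isomorphism classes of objects of $\mathcal{C}^T$ with finite formal sums (multisets) of indecomposables, bound these crudely by all functions from the $\kappa$-element set of indecomposable classes to $\mathbb{N}$ (cardinality $\aleph_0^{\kappa}$), and then count subcollections to obtain $2^{\aleph_0^{\kappa}}$. Your explicit remark that fullness and repleteness force an element of $\Loc(T)$ to be determined by its collection of object isomorphism classes merely spells out a step the paper leaves implicit.
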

\begin{proof}
The partially-ordered collection $\Loc(T)$, which by Theorem~\ref{main thm} is equivalent to \linebreak $\Ho(\HPres(T))$,
is contained in the collection of subcollections of the collection of finite formal sums of indecomposable objects.
This collection is, in turn, contained in the collection of subcollections of the collection of not-necessarily-finite formal sums of indecomposable objects in which
each indecomposable object appears only finitely many times. This last collection has cardinality $2^{\aleph_{0}^{\kappa}}$.
\end{proof}
We greatly improve this cardinality bound in Corollary~\ref{better cardinality bound} under the assumption that $\mathcal{C}^T$ has a biproduct.

\subsection{Coordinatization of the collection of homological presentations of a monad.}

\begin{definition}
Recall that a category $\mathcal{C}$ is said to {\em have a biproduct} if it has finite products and finite coproducts and, for each finite family $\{ X_i\}_{i\in I}$
of objects of $\mathcal{C}$, the canonical map $\coprod_{i\in I} X_i \rightarrow \prod_{i\in I} X_i$ is an isomorphism.
\end{definition}

Lemmas~\ref{localizations have biproducts} and~\ref{lemma on sums and summands} are easy and must be well-known, but we do not know where they already appear in the literature.
\begin{lemma}\label{localizations have biproducts}
Suppose $\mathcal{A}$ is a replete reflective subcategory of a category with biproduct. Then $\mathcal{A}$ has a biproduct.
\end{lemma}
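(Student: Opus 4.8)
The plan is to show that for any finite family $\{X_j\}_{j\in I}$ of objects of $\mathcal{A}$, the product $P$ of the $X_j$ formed in the ambient category $\mathcal{B}$ already lies in $\mathcal{A}$, and that this single object $P$ serves simultaneously as the product and as the coproduct of the $\{X_j\}$ inside $\mathcal{A}$. Write $i\colon \mathcal{A}\hookrightarrow\mathcal{B}$ for the inclusion and $L$ for its left adjoint, with unit $\eta\colon \id_{\mathcal{B}}\to iL$ and counit $\epsilon\colon Li\to\id_{\mathcal{A}}$ (the latter being invertible, since $i$ is a full and faithful right adjoint). Once we know that product and coproduct in $\mathcal{A}$ coincide on the nose, the biproduct relations will be inherited from $\mathcal{B}$ and we will be done.

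First I would record that $\mathcal{A}$ is closed under finite products in $\mathcal{B}$. Since $i$ is a right adjoint it preserves limits, so the only real content is that the limit object itself lands in $\mathcal{A}$; this is the standard fact that a full replete reflective subcategory is closed under all limits that exist in the ambient category. Concretely, for $P=\prod_{\mathcal{B}}X_j$ with projections $\pi_j\colon P\to X_j$, one produces an inverse to $\eta_P\colon P\to iLP$ out of the maps $i(\epsilon_{X_j}\circ L\pi_j)\colon iLP\to X_j$, using the universal property of the limiting cone on one side and of the reflection on the other; repleteness then gives $P\in\mathcal{A}$, and $P$ is visibly the product of the $X_j$ in $\mathcal{A}$. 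This closure statement is the one genuinely technical point; everything after it is formal.

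Next I would exploit the biproduct in $\mathcal{B}$. Because the canonical map $\coprod_{\mathcal{B}}X_j\to P$ is an isomorphism, $P$ together with the biproduct coprojections $\iota_j\colon X_j\to P$ is also a coproduct of the $X_j$ in $\mathcal{B}$. The key observation is then that fullness transports this coproduct property into $\mathcal{A}$ for free: since $X_j$, $P$, and any test object $A$ all lie in the full subcategory $\mathcal{A}$, we have $\mathrm{Hom}_{\mathcal{A}}(P,A)=\mathrm{Hom}_{\mathcal{B}}(P,A)\cong\prod_j\mathrm{Hom}_{\mathcal{B}}(X_j,A)=\prod_j\mathrm{Hom}_{\mathcal{A}}(X_j,A)$, naturally in $A$, so $(P,\iota_j)$ is the coproduct of the $X_j$ in $\mathcal{A}$ as well. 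Thus the product and the coproduct of $\{X_j\}$ in $\mathcal{A}$ are the same object $P$, carrying the same projection and coprojection data inherited from $\mathcal{B}$, where the biproduct identities $\pi_k\iota_j=\delta_{jk}$ hold; hence the canonical comparison map $\coprod^{\mathcal{A}}_jX_j\to\prod^{\mathcal{A}}_jX_j$ is forced to be the identity, and in particular an isomorphism. Taking $I=\emptyset$ recovers a zero object in $\mathcal{A}$, so $\mathcal{A}$ has a biproduct. I expect the only step requiring care to be the closure-under-products fact of the second paragraph; the passage from coproducts in $\mathcal{B}$ to coproducts in $\mathcal{A}$ is immediate from fullness.
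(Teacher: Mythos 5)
Your proof is correct, but it runs in the opposite direction from the paper's. The paper takes as its candidate biproduct the \emph{reflection} $F(G(X)\oplus G(Y))$ of the ambient biproduct: the coproduct property is then free (reflectors preserve colimits), and the product property is established by a chain of hom-set bijections, $\hom_{\mathcal{A}}(T,X)\times\hom_{\mathcal{A}}(T,Y)\cong\hom_{\mathcal{C}}(G(T),G(X)\oplus G(Y))\cong\hom_{\mathcal{C}}(G(T),GF(G(X)\oplus G(Y)))\cong\hom_{\mathcal{A}}(T,F(G(X)\oplus G(Y)))$. You instead take as candidate the ambient biproduct $P$ itself, prove explicitly that it lies in $\mathcal{A}$ (closure of replete reflective subcategories under limits), so the product property is free, and then transfer the coproduct property into $\mathcal{A}$ by fullness. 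These are dual decompositions of the same underlying fact: the key step in the paper's hom-chain, that the unit $G(X)\oplus G(Y)\to GF(G(X)\oplus G(Y))$ induces a bijection on maps from $G(T)$, holds precisely because $G(X)\oplus G(Y)$, being a limit of objects of $\mathcal{A}$, already lies in $\mathcal{A}$ and so has invertible unit --- which is exactly the closure fact you prove by hand with the retraction $iLP\to P$ built from $i(\epsilon_{X_j}\circ L\pi_j)$. What your route buys is that it makes this hidden closure fact explicit, exhibits the biproduct in $\mathcal{A}$ as literally the same object with the same (co)projections as in the ambient category (so the comparison map is the identity rather than merely invertible), and covers arbitrary finite families including the empty one, giving the zero object, where the paper treats only the binary case; what the paper's route buys is brevity, since both universal properties drop out of adjunction formalities without constructing any inverse by hand.
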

\begin{proof}
Let $\mathcal{C}$ be a category with biproduct, let $\mathcal{A}$ be a replete reflective subcategory of $\mathcal{C}$, let $G: \mathcal{A}\rightarrow\mathcal{C}$
denote the inclusion functor, and let $F: \mathcal{C}\rightarrow\mathcal{A}$ denote its left adjoint. Given objects $X,Y$ of $\mathcal{C}$, 
we claim that $F(G(X) \oplus G(Y))$ is a biproduct for $X$ and $Y$ in $\mathcal{A}$. Since $F$ is a left adjoint, it is trivial that $F(G(X)\oplus G(Y))$ is a coproduct for $X$ and $Y$ in $\mathcal{A}$.
To see that $F(G(X)\oplus G(Y))$ is also a product for $X$ and $Y$ in $\mathcal{A}$,
suppose $T$ is an object of $\mathcal{A}$, and observe that we have natural bijections
\begin{align*}
 \hom_{\mathcal{A}}(T, X) \times \hom_{\mathcal{A}}(T, Y) 
  & \cong  \hom_{\mathcal{C}}(G(T), G(X) \prod G(Y)) \\
  & \cong  \hom_{\mathcal{C}}(G(T), G(X) \oplus G(Y)) \\
  & \cong  \hom_{\mathcal{C}}(G(T), GF(G(X) \oplus G(Y))) \\
  & \cong  \hom_{\mathcal{A}}(T, F(G(X) \oplus G(Y))),\end{align*}
hence $F(G(X)\oplus G(Y))$ has the universal property of the product in $\mathcal{A}$.
\end{proof}

\begin{lemma}\label{lemma on sums and summands}
Suppose $\mathcal{A}$ is a replete reflective subcategory of a weakly Krull-Schmidt category $\mathcal{C}$ with biproduct $\oplus$. If
$X\cong Y \oplus Z$ in $\mathcal{C}$, then $X$ is in $\mathcal{A}$ if and only if both $Y$ and $Z$ are in $\mathcal{A}$.
\end{lemma}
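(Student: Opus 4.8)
The plan is to reduce both implications to the single principle that a replete reflective subcategory is closed under any limits that exist in the ambient category, together with the observations that in a category with biproducts a finite biproduct is simultaneously a finite product, and that each summand of a biproduct is a retract. Throughout I would write $G\colon \mathcal{A}\hookrightarrow\mathcal{C}$ for the inclusion, $F$ for its left adjoint, and $\eta\colon \id_{\mathcal{C}}\rightarrow GF$ for the unit of the reflection. Since $G$ is fully faithful and $\mathcal{A}$ is replete, the fact I would use repeatedly is that an object $W$ of $\mathcal{C}$ lies in $\mathcal{A}$ if and only if $\eta_W$ is an isomorphism.

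For the implication that $Y,Z\in\mathcal{A}$ forces $Y\oplus Z\in\mathcal{A}$: the biproduct $Y\oplus Z$ is, in particular, the product $Y\times Z$ computed in $\mathcal{C}$. I would invoke Lemma~\ref{localizations have biproducts} together with the fact that the right adjoint $G$ preserves products, so that the $\mathcal{A}$-biproduct of $Y$ and $Z$ is carried by $G$ to the $\mathcal{C}$-biproduct, exhibiting $Y\oplus Z$ as isomorphic to an object of $\mathcal{A}$; repleteness then gives $Y\oplus Z\in\mathcal{A}$. Alternatively one can argue directly: the two projections $p_Y,p_Z$ factor uniquely through $\eta_{Y\oplus Z}$ because their targets lie in $\mathcal{A}$, and the universal property of the product assembles these factorizations into a map $\theta\colon GF(Y\oplus Z)\rightarrow Y\oplus Z$ which one verifies is a two-sided inverse to $\eta_{Y\oplus Z}$.

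For the converse, suppose $X\cong Y\oplus Z$ with $X\in\mathcal{A}$; by repleteness $Y\oplus Z\in\mathcal{A}$ as well, so it suffices to show that each summand of an object $W:=Y\oplus Z$ of $\mathcal{A}$ again lies in $\mathcal{A}$. Using the biproduct structure maps $i_Y\colon Y\rightarrow W$ and $p_Y\colon W\rightarrow Y$ with $p_Y i_Y=\id_Y$, I would produce an explicit inverse to $\eta_Y$, namely $\psi:=p_Y\circ\eta_W^{-1}\circ GF(i_Y)$ (legitimate since $\eta_W$ is invertible, $W$ being in $\mathcal{A}$). The naturality squares of $\eta$ for $i_Y$ and for $p_Y$ yield $\psi\circ\eta_Y=\id_Y$ and $\eta_Y\circ\psi=\id_{GFY}$ after substituting $p_Y i_Y=\id_Y$, so $\eta_Y$ is an isomorphism and $Y\in\mathcal{A}$; the identical argument applied to $Z$ finishes the proof.

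The only place requiring any care is confirming that $\eta$ is a two-sided, rather than merely one-sided, isomorphism in each direction: the split-monomorphism halves are immediate, and the remaining halves use exactly the universality of the unit (equivalently, the bijection $\eta_W^{*}\colon\hom_{\mathcal{C}}(GFW,GA)\rightarrow\hom_{\mathcal{C}}(W,GA)$ for $A\in\mathcal{A}$). I expect no serious obstacle here. I would also note that the weakly Krull--Schmidt hypothesis is not actually needed for this particular statement---it holds for any replete reflective subcategory of a category with biproducts---but it is the setting in which the lemma is subsequently applied in the coordinatization results.
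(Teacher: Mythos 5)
Your proof is correct, and while your forward direction matches the paper's, your converse is a genuinely different and in fact better argument. For the implication $Y,Z\in\mathcal{A}\Rightarrow Y\oplus Z\in\mathcal{A}$, you and the paper do essentially the same thing: the paper notes that $L=GF$ preserves biproducts (left adjoint preserves coproducts, right adjoint preserves products, and by Lemma~\ref{localizations have biproducts} these agree in $\mathcal{A}$) and concludes the unit on $Y\oplus Z$ is an isomorphism, which is your argument in only slightly different clothing. For the converse, however, the paper genuinely uses the weak Krull--Schmidt hypothesis: it decomposes $X\cong\oplus_{i=1}^n X_i$ into indecomposables, observes the unit $\oplus X_i\rightarrow\oplus LX_i$ is a diagonal isomorphism, extracts a permutation $\sigma$ with $LX_i\cong X_{\sigma(i)}$, and uses idempotency of $L$ to get $\sigma\circ\sigma=\sigma$, hence $\sigma=\id$, hence each component $X_i\rightarrow LX_i$ is invertible. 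You instead prove outright that a replete reflective subcategory is closed under retracts, and your explicit two-sided inverse checks out: with $\psi=p_Y\circ\eta_W^{-1}\circ GF(i_Y)$, the naturality square for $i_Y$ gives $\psi\circ\eta_Y=p_Y\circ\eta_W^{-1}\circ\eta_W\circ i_Y=\id_Y$, and the square for $p_Y$ gives $\eta_Y\circ\psi=GF(p_Y)\circ GF(i_Y)=GF(p_Y\circ i_Y)=\id_{GFY}$, so only naturality and functoriality are needed (your closing appeal to the universal property of the unit is not even required). Your route buys two things: first, as you correctly observe, it shows the weak Krull--Schmidt hypothesis is superfluous for this lemma (the paper needs it later, in Theorem~\ref{coordinatization thm}, to know that a subcategory closed under summands and finite sums is determined by its indecomposables); second, it sidesteps some delicate bookkeeping the paper's permutation argument quietly relies on, namely that each $LX_i$ is indecomposable and nonzero so that $\sigma$ is well defined, and that $LLX_i\cong LX_i$ really yields the equation $\sigma\circ\sigma=\sigma$ of permutations rather than merely isomorphisms between possibly repeated isomorphism types. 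What the paper's version buys is only expository continuity with the Krull--Schmidt formalism used in the coordinatization results; logically, your retract argument is the stronger and cleaner one.
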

\begin{proof}
We write $L: \mathcal{C}\rightarrow \mathcal{C}$ for the composite of the reflector functor $\mathcal{C}\rightarrow\mathcal{A}$ with the inclusion 
$\mathcal{A}\rightarrow\mathcal{C}$. 
By Lemma~\ref{localizations have biproducts}, $\mathcal{A}$ has a biproduct.
Since $L$ is a composite of a left adjoint (the reflector functor) with a 
right adjoint (the inclusion functor), it preserves biproducts, since the biproduct is both the finite coproduct and the finite product.
So $LX\cong LY\oplus LZ$, and if $Y,Z$ are in $\mathcal{A}$, then the unit maps $Y\rightarrow LY$ and $Z\rightarrow LZ$ are both isomorphisms.
So $X \cong Y\oplus Z \rightarrow LY\oplus LZ \cong LX$ is an isomorphism. So $X$ is in $\mathcal{A}$.

For the converse, suppose $X$ is in $\mathcal{A}$. Let $X\cong \oplus_{i=1}^n X_i$ be the decomposition of $X$ into indecomposables, given by the weak Krull-Schmidt 
property. Then the unit map 
\[\oplus_{i=1}^n X_i\cong X\rightarrow LX\cong \oplus_{i=1}^n LX_i\]
is an isomorphism, and it is the sum of the component maps $X_i\rightarrow LX_i$, so we have some permutation
\[ \sigma: \{ 1, \dots ,n\} \rightarrow \{ 1,\dots n\}\]
with the property that $LX_i \cong X_{\sigma(i)}$. However, since $\mathcal{A}$ is a replete reflective subcategory, $L$ is {\em idempotent}, so
$\sigma\circ \sigma = \sigma$. So $\sigma$ must be the identity permutation. So each component map $X_i\rightarrow LX_i$ is an isomorphism.
Hence if $X$ splits as a direct sum and $X$ is in $\mathcal{A}$, each summand is also in $\mathcal{A}$.
\end{proof}

\begin{theorem} {(\bf Coordinatization.)} \label{coordinatization thm} Let $\mathcal{C}$ be a category, $T$ a coequalizable 
monad on $\mathcal{C}$. Suppose the Eilenberg-Moore category $\mathcal{C}^T$ has a biproduct and is weakly Krull-Schmidt. 
Write $\Gamma(\mathcal{C}^T)$ for the collection of isomorphism classes of indecomposable objects in $\mathcal{C}^T$.
Then $\Ho(\HPres(T))$ embeds by an order-preserving map into the collection of subcollections of $\Gamma(\mathcal{C}^T)$.
\end{theorem}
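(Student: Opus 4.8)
The plan is to first invoke Theorem~\ref{main thm} to identify $\Ho(\HPres(T))$ with the partially-ordered collection $\Loc(T)$ of replete reflective subcategories of $\mathcal{C}^T$ presenting $T$, so that it suffices to produce an order-preserving injection from $\Loc(T)$ into the collection of subcollections of $\Gamma(\mathcal{C}^T)$ ordered by inclusion. The natural candidate is the map $\Phi$ sending a subcategory $\mathcal{D}$ to the collection $\Phi(\mathcal{D}) = \{ [X] \in \Gamma(\mathcal{C}^T) : X \in \mathcal{D}\}$ of isomorphism classes of the indecomposable objects it contains. That $\Phi$ is order-preserving is immediate from the definition: if $\mathcal{D} \subseteq \mathcal{D}'$ then $\Phi(\mathcal{D}) \subseteq \Phi(\mathcal{D}')$. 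All of the content is therefore in proving that $\Phi$ is injective, i.e. that a replete reflective subcategory is recoverable from the indecomposables it contains.

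First I would establish the key claim: for any replete reflective subcategory $\mathcal{D}$ of $\mathcal{C}^T$ and any object $X$, one has $X \in \mathcal{D}$ if and only if every indecomposable summand of $X$ lies in $\mathcal{D}$. This is exactly where the hypotheses are used. The biproduct on $\mathcal{C}^T$ makes Lemma~\ref{lemma on sums and summands} available, and weak Krull-Schmidt guarantees that $X$ admits a decomposition $X \cong \bigoplus_{i=1}^{n} X_i$ into indecomposables in the first place. I would prove the claim by induction on $n$: writing $X \cong X_1 \oplus (X_2 \oplus \dots \oplus X_n)$, Lemma~\ref{lemma on sums and summands} gives that $X \in \mathcal{D}$ iff $X_1 \in \mathcal{D}$ and $X_2 \oplus \dots \oplus X_n \in \mathcal{D}$, after which the inductive hypothesis applies. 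The degenerate case $X \cong 0$ is covered by observing that a reflective subcategory is closed under limits computed in the ambient category and so contains the terminal object, which here coincides with the zero object.

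Granting the claim, injectivity follows quickly. Since $\mathcal{D}$ is replete, membership of $X$ depends only on its isomorphism class, so the claim exhibits $\mathcal{D}$ as exactly the collection of objects all of whose indecomposable summands have class in $\Phi(\mathcal{D})$; the uniqueness clause of weak Krull-Schmidt is what makes ``the indecomposable summands of $X$'' a well-defined finite multiset of elements of $\Gamma(\mathcal{C}^T)$, so this description depends only on the subset $\Phi(\mathcal{D})$. Because reflective subcategories are full, two such subcategories with the same objects are equal, so $\Phi(\mathcal{D}) = \Phi(\mathcal{D}')$ forces $\mathcal{D} = \mathcal{D}'$; running the same summand argument in the other direction even shows that $\Phi(\mathcal{D}) \subseteq \Phi(\mathcal{D}')$ implies $\mathcal{D} \subseteq \mathcal{D}'$, so $\Phi$ is in fact an order-embedding and not merely an order-preserving injection. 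The main obstacle is the key claim of the second paragraph, where the interaction of repleteness, reflectivity, and the biproduct must be packaged correctly through Lemma~\ref{lemma on sums and summands}; once that is in hand, the remaining arguments are bookkeeping, the only subtle points being the consistent treatment of the zero object and the empty decomposition, and the verification that $\Gamma(\mathcal{C}^T)$ is a legitimate indexing collection so that ``subcollection'' is meaningful.
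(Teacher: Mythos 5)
Your proposal is correct and takes essentially the same route as the paper's own proof: identify $\Ho(\HPres(T))$ with $\Loc(T)$ via Theorem~\ref{main thm}, then use Lemma~\ref{lemma on sums and summands}, applied across a weak Krull-Schmidt decomposition, to conclude that a replete reflective subcategory is determined by the indecomposables it contains. The paper compresses this into two sentences; your induction on the number of summands, the zero-object check, and the observation that the map is in fact an order-embedding simply make explicit what the paper leaves implicit.
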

\begin{proof}
By Theorem~\ref{main thm}, specifying an element of $\Ho(\HPres(T))$ is equivalent to specifying a replete reflective subcategory of $\mathcal{C}^T$ which contains $\mathcal{C}_T$,
hence is determined uniquely by which isomorphism classes of objects in $\mathcal{C}^T$ are
contained in the replete reflective subcategory. But by Lemma~\ref{lemma on sums and summands}, a replete reflective subcategory of a weakly Krull-Schmidt category with biproduct
is determined uniquely by which indecomposables are contained in it.
\end{proof}
In other words: under the conditions of Theorem~\ref{coordinatization thm}, a homological presentation of $T$
can be specified by specifying a subcollection of $\Gamma(\mathcal{C}^T)$ 
(which, as we describe in the last section of this paper, is actually computable in cases of interest). 
Since $\mathcal{C}^T$ is often computable and understandable, Theorem~\ref{coordinatization thm}---when
it applies---gives a coordinatization of the collection of homological presentations of $T$, as desired.

\begin{corollary} \label{better cardinality bound} 
Suppose $T$ is coequalizable and $\mathcal{C}^T$ is weakly Krull-Schmidt and has a biproduct.
Suppose the collection of 
isomorphism classes of indecomposable objects forms a {\em set} (not a proper class!), and suppose that set has cardinality $\kappa$.
Then $\Ho(\HPres(T))$ is equivalent to a partially-ordered {\em set} of cardinality no greater than $2^{\kappa}$.
\end{corollary}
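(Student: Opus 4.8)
The plan is to derive this as an essentially immediate consequence of the coordinatization theorem, Theorem~\ref{coordinatization thm}, in exactly the same way that Corollary~\ref{cor on hpres being a poset} was derived from a cruder containment argument, but now taking advantage of the biproduct hypothesis to replace the count over finite formal sums of indecomposables by a count over subcollections of indecomposables alone. First I would invoke Theorem~\ref{coordinatization thm}: under the standing hypotheses that $T$ is coequalizable and $\mathcal{C}^T$ has a biproduct and is weakly Krull-Schmidt, there is an order-preserving map from $\Ho(\HPres(T))$ into the collection of subcollections of $\Gamma(\mathcal{C}^T)$ which is injective on isomorphism classes of objects. The content making this injection available is Lemma~\ref{lemma on sums and summands}, which guarantees that a replete reflective subcategory of $\mathcal{C}^T$ is determined by the collection of indecomposables it contains, rather than merely by the collection of all objects (equivalently, all finite formal sums of indecomposables) it contains.

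Next I would simply count. By hypothesis the collection $\Gamma(\mathcal{C}^T)$ of isomorphism classes of indecomposable objects of $\mathcal{C}^T$ is a genuine set of cardinality $\kappa$, so the collection of its subcollections is its power set, a set of cardinality $2^{\kappa}$. Composing the order-preserving injection of the previous paragraph with this identification exhibits $\Ho(\HPres(T))$ as order-isomorphic to a sub-poset of a set of cardinality $2^{\kappa}$. Since by the corollary following Theorem~\ref{main thm} the large homotopy category $\Ho(\HPres(T))$ is already partially ordered (there is at most one morphism between any two objects), this order-isomorphism onto the image realizes $\Ho(\HPres(T))$ as equivalent to a partially-ordered set whose underlying collection is a subset of a set of cardinality $2^{\kappa}$, hence of cardinality at most $2^{\kappa}$, as claimed.

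I do not expect any genuine obstacle here: all the substantive work has already been done in Theorem~\ref{coordinatization thm} and its supporting lemmas, and what remains is a one-line cardinality estimate. The only point requiring the slightest care is to confirm that the map supplied by Theorem~\ref{coordinatization thm} is injective on objects---so that the cardinality of the source is genuinely bounded by the cardinality of the power set of $\Gamma(\mathcal{C}^T)$, rather than merely mapped into it with possible collisions---but this injectivity is exactly the uniqueness clause of Lemma~\ref{lemma on sums and summands}. Thus the improvement over the bound $2^{\aleph_{0}^{\kappa}}$ of Corollary~\ref{cor on hpres being a poset} is purchased entirely by the biproduct assumption, which is what lets us coordinatize by indecomposables alone.
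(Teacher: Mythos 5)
Your proposal is correct and is exactly the argument the paper intends: the corollary is stated without proof precisely because it follows immediately from the embedding of Theorem~\ref{coordinatization thm} (whose injectivity rests on Lemma~\ref{lemma on sums and summands}) together with the observation that the subcollections of a set of cardinality $\kappa$ form a set of cardinality $2^{\kappa}$. Your added care about injectivity on objects and the prior corollary guaranteeing $\Ho(\HPres(T))$ is partially ordered matches the paper's setup, so there is nothing to correct.
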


\begin{remark}\label{coordinatization remark}
Suppose that $k$ is a field and $A$ is a $k$-algebra.
Let $\mathcal{C}$ be any category and $T$ any monad on $\mathcal{C}$ whose Eilenberg-Moore category $\mathcal{C}^T$ is equivalent to the category $\fgMod(A)$ of finitely generated $A$-modules; for example, we could let $\mathcal{C}$ be the category of $B$-modules, for some reasonable subalgebra $B$ of $A$, and we could let $T$ be the monad associated to the free-forgetful adjunction between $B$-modules and $A$-modules.
In Auslander-Reiten theory, the set of isomorphism classes of indecomposable finitely generated $A$-modules is exactly the set $\Gamma(\fgMod(A))$ 
of vertices in the Auslander-Reiten quiver of $\mathcal{C}^T$. 
So one can regard the vertices of the Auslander-Reiten quiver of $\mathcal{C}^T$ as ``coordinates'' for the collection of natural equivalence classes of homological presentations of $T$: Theorem~\ref{coordinatization thm} gives 
an embedding of $\Ho(\Pres(T))$ into the partially-ordered set of subsets of $\Gamma(\fgMod(A))$.
\end{remark}

\section{A criterion for unique homological presentability of a monad.}

\subsection{Preliminary definitions.}

Some monads can (up to natural equivalence) only be homologically presented in a single way, i.e., $\Ho(\HPres(T))$ is equivalent to a one-object category.  Here is the relevant definition:
\begin{definition}
Suppose $T$ is a monad. If $\Ho(\HPres(T))$ has only a single element, then we say that $T$ is {\em uniquely homologically presentable.}
\end{definition}
We give a concrete algebraic class of examples (base-change monads associated to field extensions) of uniquely homologically presentable monads
in Corollary~\ref{field extension monads are uniquely homologically presentable}. 

Because we will need to make use of it, we state Beck's monadicity theorem (see e.g. VI.7 of \cite{MR1712872}):
\begin{theorem} {\bf (Beck.)} \label{beck's thm}
Suppose $\mathcal{C},\mathcal{D}$ are categories, $G: \mathcal{D}\rightarrow\mathcal{C}$ a functor with a left adjoint $F$.
Then the comparison functor $\mathcal{D}\rightarrow\mathcal{C}^{GF}$ is an equivalence of categories if and only if,
whenever a parallel pair $f,g:X\rightarrow Y$ in $\mathcal{D}$ is such that $Gf,Gg$ has a split coequalizer in $\mathcal{C}$,
each of the following conditions hold:
\begin{itemize}
\item $f,g$ has a coequalizer $\coeq\{f,g\}$ in $\mathcal{D}$,
\item $G$ preserves the coequalizer of $f,g$, i.e., the natural map \linebreak $\coeq\{ Gf,Gg\}\rightarrow G\coeq\{ f,g\}$ is an isomorphism,
\item and $G$ reflects the coequalizer of $f,g$, i.e., if $Z$ is a cocone over the diagram $f,g:X\rightarrow Y$ such that $GZ$ is a coequalizer of 
$Gf,Gg$, then $Z$ is a coequalizer of $f,g$.
\end{itemize}
\end{theorem}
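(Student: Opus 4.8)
The plan is to prove the nontrivial (\emph{if}) direction by constructing an explicit quasi-inverse to the comparison functor $K$, and to reduce the \emph{only if} direction to the special case $\mathcal{D} = \mathcal{C}^{GF}$. Throughout, I write $T = GF$ for the induced monad, $\eta$ for its unit, $\epsilon: FG \rightarrow \id_{\mathcal{D}}$ for the counit of $F \dashv G$, and $\mu = G\epsilon F$ for the multiplication; recall that $K$ sends $D$ to the $T$-algebra $(GD, G\epsilon_D)$ and acts by $G$ on morphisms. For the \emph{only if} direction, once $K$ is known to be an equivalence I would use $G = U^T \circ K$ (with $U^T: \mathcal{C}^{GF} \rightarrow \mathcal{C}$ the forgetful functor) together with the fact---which is exactly the \emph{if} computation below specialized to $\mathcal{D} = \mathcal{C}^{GF}$---that $U^T$ satisfies the three listed conditions; transporting these along the equivalence $K$ then yields them for $G$.

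For the \emph{if} direction, the first and central step is the canonical presentation of a $T$-algebra $(A,a)$. I would consider the parallel pair
\[ Fa,\ \epsilon_{FA}:\ FTA \rightarrow FA \]
in $\mathcal{D}$ and observe that its image under $G$ is the pair $Ta,\ \mu_A: T^2A \rightarrow TA$. The key point is that this latter pair is a \emph{split} coequalizer in $\mathcal{C}$ with coequalizer $a: TA \rightarrow A$, the splitting being furnished by the unit maps $\eta_A$ and $\eta_{TA}$; the requisite split-coequalizer identities are precisely the $T$-algebra axioms for $(A,a)$ together with the unit and naturality laws for $\eta$ and $\mu$. Hence $Fa, \epsilon_{FA}$ is a $G$-split pair, and the hypotheses guarantee that its coequalizer in $\mathcal{D}$ exists, is preserved by $G$, and is reflected by $G$. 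I define $L(A,a)$ to be this coequalizer.

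Next I would show that $L$ is a quasi-inverse to $K$. First I would establish that $L$ is left adjoint to $K$: combining the universal property of the coequalizer defining $L(A,a)$ with the adjunction $F \dashv G$, a map $L(A,a) \rightarrow D$ corresponds to a map $h: FA \rightarrow D$ with $h \circ Fa = h \circ \epsilon_{FA}$, and transposing $h$ across $F \dashv G$ converts this equation into exactly the condition that the transpose $A \rightarrow GD$ be a morphism of $T$-algebras $(A,a) \rightarrow KD$; this yields a natural isomorphism $\hom_{\mathcal{D}}(L(A,a), D) \cong \hom_{\mathcal{C}^{GF}}((A,a), KD)$, making $L$ functorial and left adjoint to $K$. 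It then remains to check that the unit and counit of $L \dashv K$ are isomorphisms. For the unit $\id_{\mathcal{C}^{GF}} \rightarrow KL$: since $G$ preserves the coequalizer defining $L(A,a)$, the underlying object of $KL(A,a)$ is $\coeq\{Ta, \mu_A\} = A$ with structure map $a$, so $KL(A,a) \cong (A,a)$. For the counit $LK \rightarrow \id_{\mathcal{D}}$: unwinding the definitions, $LKD$ is the coequalizer of $FG\epsilon_D,\ \epsilon_{FGD}: FGFGD \rightarrow FGD$, and $\epsilon_D: FGD \rightarrow D$ is a cocone on this pair whose $G$-image is the split coequalizer presenting $(GD, G\epsilon_D)$; since the pair is $G$-split and $G$ reflects such coequalizers, $\epsilon_D$ exhibits $D$ as the coequalizer, so $LKD \cong D$. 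With both unit and counit invertible, $K$ is an equivalence.

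I expect the main obstacle to be the adjunction bookkeeping in the third step---verifying by a diagram chase through $F \dashv G$ that the transpose of a map coequalizing $Fa$ and $\epsilon_{FA}$ is precisely a $T$-algebra homomorphism into $KD$. The split-coequalizer verification in the second step is the conceptual heart but is a short, standard manipulation of the unit/counit and algebra identities; once it is secured, the three hypotheses of the theorem are invoked essentially off the shelf, and the checks that the unit and counit are isomorphisms follow immediately from preservation and reflection, respectively.
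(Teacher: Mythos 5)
The paper gives no proof of this statement: it is Beck's monadicity theorem, quoted as a known result with a citation to VI.7 of \cite{MR1712872}, so there is nothing internal to compare against. Your argument is correct and is essentially the standard proof found in that reference: the canonical split-coequalizer presentation of an algebra (the pair $Ta, \mu_A: T^2A \rightarrow TA$ coequalized by $a$, split by $\eta_A$ and $\eta_{TA}$), the construction of $L(A,a)$ as the coequalizer in $\mathcal{D}$ of $Fa, \epsilon_{FA}$, the adjunction $L \dashv K$ obtained by transposing across $F \dashv G$, and the verification that the unit and counit are invertible using preservation and reflection respectively. The one imprecise point is your parenthetical claim that the fact that the forgetful functor $U^T: \mathcal{C}^{GF} \rightarrow \mathcal{C}$ satisfies the three listed conditions is ``exactly the \emph{if} computation specialized to $\mathcal{D} = \mathcal{C}^{GF}$'': that fact (that $U^T$ creates coequalizers of $U^T$-split pairs) is a separate, though standard, argument, resting on the observation that split coequalizers are absolute, so that $T$ applied to the given split coequalizer is again a coequalizer, which lets one construct and verify the induced algebra structure on the coequalizer object. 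This is routine and does not affect the soundness of your proof.
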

Here is a very classical definition:
\begin{definition} 
When $G$ is a functor with left adjoint, we say that $G$ is {\em monadic} if $G$ satisfies the equivalent conditions of Theorem~\ref{beck's thm}.
\end{definition}
We offer a (to our knowledge, new) variant on this definition which will be essential to our criterion for unique homological presentability of a monad.
\begin{definition}
Suppose $\mathcal{C},\mathcal{D}$ are categories, $G: \mathcal{D}\rightarrow\mathcal{C}$ a functor.
We say that $G$ is {\em absolutely monadic} if $G$ has a left adjoint and, whenever a parallel pair $f,g:X\rightarrow Y$ in $\mathcal{D}$ is such that $Gf,Gg$ has a split coequalizer in $\mathcal{C}$,
then:
\begin{itemize}
\item $f,g$ has a {\em split} coequalizer $\coeq\{f,g\}$ in $\mathcal{D}$,
\item $G$ preserves the coequalizer of $f,g$, i.e., the natural map \linebreak $\coeq\{ Gf,Gg\}\rightarrow G\coeq\{ f,g\}$ is an isomorphism,
\item and $G$ reflects the coequalizer of $f,g$, i.e., if $Z$ is a cocone over the diagram $f,g:X\rightarrow Y$ such that $GZ$ is a coequalizer of 
$Gf,Gg$, then $Z$ is a coequalizer of $f,g$.
\end{itemize}
\end{definition}
Note that a functor that is absolutely monadic is also monadic, but the converse does not always hold.

\subsection{A criterion for unique homological presentability.} Now we present and prove the main result of this section.

First we will need a lemma. We suspect that this lemma is already well-known, but we do not know an already-existing reference in the literature.
\begin{lemma}\label{reflectors are beck}
Suppose $\mathcal{D},\mathcal{E}$ are categories, and $\mathcal{D}\stackrel{S}{\longrightarrow} \mathcal{E}$
is a full, faithful functor with a left adjoint. Then $S$ is monadic.
\end{lemma}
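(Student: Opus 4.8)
The plan is to verify the three conditions of Beck's monadicity theorem (Theorem~\ref{beck's thm}) directly, exploiting two standard consequences of $S$ being full and faithful. Writing $L$ for the left adjoint of $S$, the first is that the counit $LS\to\id_{\mathcal{D}}$ is a natural isomorphism; the second is that $S$ reflects isomorphisms (if $Sm$ is invertible, then fullness lifts the inverse and faithfulness checks that it is a two-sided inverse for $m$). The other ingredient I would lean on is the elementary fact that a \emph{split} coequalizer is absolute, i.e.\ preserved by every functor, since its defining data are equations between composites of the arrows involved.

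So suppose $f,g\colon X\to Y$ is a parallel pair in $\mathcal{D}$ such that $Sf,Sg$ admits a split coequalizer $SX\rightrightarrows SY\xrightarrow{q}Q$ in $\mathcal{E}$. First I would apply $L$: because the coequalizer is split (hence absolute), $LSX\rightrightarrows LSY\xrightarrow{Lq}LQ$ is again a split coequalizer, and transporting along the isomorphism $LS\cong\id_{\mathcal{D}}$ turns this into a split coequalizer $X\rightrightarrows Y\to LQ$ of $f,g$ in $\mathcal{D}$. This establishes the first Beck condition, and in the strongest possible form: the coequalizer of $f,g$ is itself split.

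Next, for preservation: since the coequalizer $X\rightrightarrows Y\to LQ$ just produced is split in $\mathcal{D}$, it too is absolute, so $S$ carries it to a (split) coequalizer $SX\rightrightarrows SY\to SLQ$ of $Sf,Sg$ in $\mathcal{E}$. As any two coequalizers of the same diagram are canonically isomorphic, the comparison map $\coeq\{Sf,Sg\}\to S\coeq\{f,g\}$, namely $Q\to SLQ$, is an isomorphism, which is the second condition. For reflection, suppose $Z$ is a cocone over $f,g$ in $\mathcal{D}$, with structure map $h\colon Y\to Z$ satisfying $hf=hg$, such that $SZ$ is a coequalizer of $Sf,Sg$. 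The universal property of the coequalizer $LQ$ yields a comparison map $LQ\to Z$; applying $S$ identifies it with the canonical comparison between the two coequalizers $SLQ\cong Q$ and $SZ$ of $Sf,Sg$, which is therefore an isomorphism. Since $S$ reflects isomorphisms, $LQ\to Z$ is an isomorphism, so $Z$ is a coequalizer of $f,g$. With all three conditions verified, Beck's theorem gives that $S$ is monadic.

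I expect the only real subtlety to be bookkeeping rather than a genuine obstacle: one must check that the various isomorphisms I produce are compatible with the relevant cocone structures, so that they really are the canonical comparison maps named in Beck's conditions, not merely abstract isomorphisms of objects. Once one records that split coequalizers are absolute and that $LS\cong\id_{\mathcal{D}}$, the whole argument reduces to the uniqueness of coequalizers together with the fact that a full, faithful functor reflects isomorphisms.
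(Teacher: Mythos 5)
Your proof is correct and takes essentially the same route as the paper: both arguments apply the left adjoint to the given $S$-split coequalizer, use the counit isomorphism $LS\cong\id_{\mathcal{D}}$ together with the absoluteness of split coequalizers to produce a split coequalizer of $f,g$ in $\mathcal{D}$, and then settle preservation and reflection via the fact that a full, faithful functor reflects isomorphisms. The only difference is expository: you spell out each of Beck's three conditions (including the comparison-map bookkeeping for the reflection condition), which the paper's proof compresses into its final sentence.
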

\begin{proof}
Since $S$ is full and faithful, we regard it as inclusion of a subcategory $\mathcal{D}$ of $\mathcal{E}$. 
Then since $S$ has a left adjoint, $\mathcal{D}$ is a reflective subcategory of $\mathcal{E}$.
We write $V$ for the left adjoint of $S$.
Let $f,g: X\rightarrow Y$ be a pair of maps in $\mathcal{D}$ such that $Sf,Sg$ has a split coequalizer $Z$
Then we can apply $V$ together with the natural equivalence $VS\simeq \id_{\mathcal{D}}$ 
to get that $VZ$ is a split coequalizer of $f,g$. Hence $S$ sends a cofork in $\mathcal{D}$ to a split coequalizer in $\mathcal{E}$ if and only if the cofork
was already a split coequalizer in $\mathcal{D}$. So $S$ preserves coequalizers of all pairs in $\mathcal{D}$ with a $S$-split coequalizer, and since $S$ is
faithful and injective on objects, it reflects isomorphisms; so $S$ is monadic.
\end{proof}

\begin{theorem}\label{criterion for uhp}
Suppose $\mathcal{C}$ is a category, $T$ a coequalizable monad on $\mathcal{C}$. We write $F$ for the canonical functor $\mathcal{C}\rightarrow\mathcal{C}^T$
and $G$ for its right adjoint. If $G$ is absolutely monadic, then $T$ is uniquely homologically presentable.
\end{theorem}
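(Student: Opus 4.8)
The plan is to use Theorem~\ref{main thm} to convert the statement into a claim about $\Loc(T)$. Since $\Ho(\HPres(T))$ is equivalent to the partially-ordered collection $\Loc(T)$ of replete reflective subcategories of $\mathcal{C}^T$ which present $T$, and since $\mathcal{C}^T$ is always such a subcategory, it suffices to show that $\Loc(T)$ has no \emph{other} element: that every replete reflective subcategory $\mathcal{D}\subseteq\mathcal{C}^T$ containing the free $T$-algebras is in fact all of $\mathcal{C}^T$. Writing $K:\mathcal{D}\hookrightarrow\mathcal{C}^T$ for the inclusion (full and faithful, with a left adjoint $V$ because $\mathcal{D}$ is reflective), I would observe that $K$ is exactly the comparison functor $\mathcal{D}\to\mathcal{C}^{G'F'}=\mathcal{C}^T$ for the induced presentation $(\mathcal{D},F',G')$ with $G'=G\circ K$. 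Hence by Beck's theorem (Theorem~\ref{beck's thm}) it is enough to prove that $G'=G\circ K$ is monadic: then $K$ is an equivalence, and being a replete full inclusion it is an equality, so $\mathcal{D}=\mathcal{C}^T$.

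The heart of the argument is therefore to verify that $G\circ K$ satisfies Beck's conditions, using that $G$ is absolutely monadic by hypothesis and that $K$ is monadic by Lemma~\ref{reflectors are beck} (as $K$ is full, faithful, with a left adjoint). Let $f,g\colon X\to Y$ be a parallel pair in $\mathcal{D}$ such that $G'f=GKf$ and $G'g=GKg$ admit a split coequalizer in $\mathcal{C}$. First I would feed the pair $Kf,Kg$ of $\mathcal{C}^T$ into the absolute monadicity of $G$: since $G(Kf),G(Kg)$ has a split coequalizer in $\mathcal{C}$, absolute monadicity yields that $Kf,Kg$ has a \emph{split} coequalizer $Z$ in $\mathcal{C}^T$, preserved and reflected by $G$. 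Then I would feed this split coequalizer into the monadicity of $K$: since $Kf,Kg$ has a split coequalizer in $\mathcal{C}^T$, Beck's conditions for $K$ give that $f,g$ has a coequalizer $W$ in $\mathcal{D}$ with $K$ preserving it (so $KW\cong Z$) and reflecting it.

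With $Z$ and $W$ produced, the three Beck conditions for $G'$ would follow mechanically. Existence of $\coeq\{f,g\}$ is $W$. Preservation holds because split coequalizers are absolute, so $G'W=G(KW)=GZ$ is the split coequalizer of $G'f,G'g$. For reflection, given a cocone $C$ over $f,g$ with $G'C$ a coequalizer of $G'f,G'g$, I would apply the reflection clause of the absolute monadicity of $G$ to conclude that $KC$ is a coequalizer of $Kf,Kg$ in $\mathcal{C}^T$, and then apply the reflection clause of the monadicity of $K$ to conclude that $C$ is a coequalizer of $f,g$ in $\mathcal{D}$. Hence $G'$ is monadic, and the proof concludes as above.

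I expect the one genuinely delicate point to be \emph{why absolute monadicity, rather than plain monadicity, of $G$ is needed}. The composition step works only because Beck's conditions for $K$ require the pair $Kf,Kg$ to have a \emph{split} coequalizer in $\mathcal{C}^T$, not merely a coequalizer; ordinary monadicity of $G$ would only produce a coequalizer in $\mathcal{C}^T$, which cannot be fed into $K$'s conditions. Absolute monadicity of $G$ is precisely the strengthening that upgrades this coequalizer to a split one, letting the two layers of monadicity compose. Keeping careful track of which colimits are split (hence absolute) versus merely present will be the main thing to get right; everything else is a diagram chase once $Z$ and $W$ are in hand.
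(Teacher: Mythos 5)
Your proposal is correct and takes essentially the same route as the paper's proof: both verify Beck's conditions for $G' = G\circ K$ by composing the absolute monadicity of $G$ with the monadicity of the full, faithful inclusion $K$ (Lemma~\ref{reflectors are beck}), and you correctly identify that the \emph{split} coequalizer in $\mathcal{C}^T$ produced by absolute monadicity is exactly what lets the two layers compose. The only cosmetic difference is that you enter through Theorem~\ref{main thm} and $\Loc(T)$, whereas the paper argues directly from an arbitrary homological presentation, obtaining that $K$ is full and faithful with a left adjoint from Lemmas~\ref{main lemma 1} and~\ref{main lemma 2}; since Theorem~\ref{main thm} is proved from those same lemmas, the two framings are equivalent.
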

\begin{proof}
Suppose $G$ is absolutely monadic, and suppose that $(\mathcal{D},F^{\prime},G^{\prime})$ is a presentation of $T$. We have the comparison functor
$K: \mathcal{D}\rightarrow \mathcal{C}^T$, and we have that $G^{\prime} = G\circ K$. 
We are going to show that, if $(\mathcal{D},F^{\prime},G^{\prime})$ is homological, then $K$ is an equivalence.

Suppose $f,g$ is a parallel pair in 
$\mathcal{D}$ such that $G^{\prime}f,G^{\prime}g$ has a split coequalizer in $\mathcal{C}$. Then $Kf,Kg$ is a parallel pair in $\mathcal{C}^T$ such that
$G(Kf),G(Kg)$ has a split coequalizer in $\mathcal{C}$, and since $G$ is absolutely monadic, $Kf,Kg$ has a split coequalizer $Z$ such that $GZ$ is the given split
coequalizer for $G^{\prime}f,G^{\prime} g$. But, by Lemma~\ref{reflectors are beck}, $K$ is monadic, hence, by Theorem~\ref{beck's thm}, $f,g$ has a
coequalizer $W$ such that $KW$ is $Z$. Hence $f,g$ has a coequalizer in $\mathcal{D}$ and $G^{\prime}$ preserves that coequalizer.

Now we check that $G^{\prime}$ reflects appropriate coequalizers. Suppose 
\begin{equation}\label{starting cofork} \xymatrix{ X \ar@<1ex>[r]^{f} \ar@<-1ex>[r]_{g} & Y \ar[r] & Z } \end{equation} 
is a cofork in $\mathcal{D}$ such that the cofork
\begin{equation} \xymatrix{ G^{\prime}X \ar@<1ex>[r]^{G^{\prime}f} \ar@<-1ex>[r]_{G^{\prime}g} & G^{\prime}Y \ar[r] & G^{\prime}Z } \end{equation} 
is a split coequalizer sequence in $\mathcal{C}$. Again using the fact that $G^{\prime} = G\circ K$
and using that $G$ is absolutely monadic, we have that the cofork
\begin{equation} \xymatrix{ KX \ar@<1ex>[r]^{Kf} \ar@<-1ex>[r]_{Kg} & KY \ar[r] & KZ } \end{equation} 
in $\mathcal{C}^T$ is a split coequalizer sequence; finally, by Lemma~\ref{reflectors are beck} and Theorem~\ref{beck's thm},
$K$ reflects such coequalizers, so cofork~\ref{starting cofork} is a coequalizer sequence in $\mathcal{D}$.

Hence $G^{\prime}$ preserves and reflects coequalizers of all parallel pairs $f,g$ such that $Gf,Gg$ has a split coequalizer. Hence,
by Theorem~\ref{beck's thm}, $G^{\prime}$ is monadic, and the comparison map $\mathcal{D}\rightarrow \mathcal{C}^{G^{\prime}F^{\prime}} = \mathcal{C}^T$ is an equivalence of categories. 

Hence every homological presentation $\mathcal{D}$ of $T$ is equivalent to the entire Eilenberg-Moore category of $T$. Hence $\Ho(\HPres(T))$ consists of a
single element, the Eilenberg-Moore presentation.
\end{proof}

\begin{corollary}
Suppose $\mathcal{C}$ is an abelian category and $T$ a monad on $\mathcal{C}$ such that $\mathcal{C}^T$ is abelian and the canonical
functor $G: \mathcal{C}^T\rightarrow\mathcal{C}$ is additive. Suppose that, if 
\[  X \rightarrow Y \rightarrow Z \]
is a pair of maps in $\mathcal{C}^T$ such that
\[ GX \rightarrow GY \rightarrow GZ \rightarrow 0\]
is split exact in $\mathcal{C}$, then
\[ X \rightarrow Y \rightarrow Z \rightarrow 0 \]
is split exact in $\mathcal{C}^T$.
Then $T$ is uniquely homologically presentable.
\end{corollary}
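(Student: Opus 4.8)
The statement is positioned as a consequence of Theorem~\ref{criterion for uhp}, so the expected route is to verify that the forgetful functor $G: \mathcal{C}^T\rightarrow\mathcal{C}$ is \emph{absolutely monadic}; note first that since $\mathcal{C}^T$ is abelian, $T$ is automatically coequalizable and $G$ is additive, faithful, and (being a right adjoint) left exact. Under these hypotheses the dictionary ``$G$-split coequalizer of $(f,g)$'' versus ``$G$-split-exact cokernel sequence of $f-g$'' turns the \emph{reflection} clause of absolute monadicity into precisely the hypothesis of the corollary: if an upstairs fork has $G$-image a split coequalizer, then its $G$-image is split exact, so by hypothesis the fork is split exact upstairs, whence the cokernel map is the coequalizer. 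The delicate clause on this route is instead the \emph{existence} of a split coequalizer upstairs: one must assemble the reflected contracting homotopy into a single coherent datum $\tilde t$ satisfying $f\tilde t=\mathrm{id}$ and $g\tilde t=\tilde s p$ simultaneously, and this coherence is the one point where faithfulness of $G$ and the reflection hypothesis have to be combined with some care.

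I would instead give a cleaner, self-contained argument through Theorem~\ref{main thm}, which identifies $\Ho(\HPres(T))$ with the poset $\Loc(T)$ of replete reflective subcategories of $\mathcal{C}^T$ containing the free algebras; it therefore suffices to show $\Loc(T)=\{\mathcal{C}^T\}$. Fix $\mathcal{A}\in\Loc(T)$ and an arbitrary $T$-algebra $M$, with free-algebra counit $\epsilon_M: FGM\rightarrow M$. The canonical bar presentation exhibits $\epsilon_M$ as the coequalizer of the two maps $\epsilon_{FGM},FG\epsilon_M: FGFGM\rightarrow FGM$, and, as recorded in Remark~\ref{remark on def of homological}, the unit maps $\eta_{GM}$ and $\eta_{GFGM}$ split this cofork after applying $G$; hence
\[ GFGFGM \stackrel{G\epsilon_{FGM}-GFG\epsilon_M}{\longrightarrow} GFGM \stackrel{G\epsilon_M}{\longrightarrow} GM \longrightarrow 0 \]
is split exact in $\mathcal{C}$. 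Applying the corollary's hypothesis to the composable pair $FGFGM\rightarrow FGM\rightarrow M$ in $\mathcal{C}^T$ (the first map being the difference $\epsilon_{FGM}-FG\epsilon_M$, formed using that $\mathcal{C}^T$ is abelian) shows that $FGFGM\rightarrow FGM\stackrel{\epsilon_M}{\longrightarrow} M\rightarrow 0$ is split exact in $\mathcal{C}^T$; in particular $\epsilon_M$ is a split epimorphism, so $M$ is a retract of the free algebra $FGM$.

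Since $\mathcal{A}$ presents $T$ it contains every free algebra, and a replete reflective subcategory of the (idempotent-complete) abelian category $\mathcal{C}^T$ is closed under retracts: a retract of $A\in\mathcal{A}$ is the image of an idempotent on $A$, i.e.\ an equalizer, and full reflective subcategories are closed under limits. Hence $M\in\mathcal{A}$; as $M$ was arbitrary, $\mathcal{A}=\mathcal{C}^T$, so $\Loc(T)$ is a one-element poset and, by Theorem~\ref{main thm}, $T$ is uniquely homologically presentable. On this route the only nontrivial inputs are the bar presentation together with its $G$-splitting by units (Remark~\ref{remark on def of homological}) and the standard retract-closure of replete reflective subcategories; everything else is formal. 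I expect this to be the main place to take care, but it sidesteps the coherence difficulty of the absolute-monadicity route entirely, since one only ever needs that $\epsilon_M$ splits, never a full split-coequalizer datum.
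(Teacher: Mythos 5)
Your proof is correct, but it takes a genuinely different route from the paper's. The paper proves this corollary in one line: it asserts that the displayed hypothesis is precisely the translation of absolute monadicity of $G$ into the abelian setting, and then invokes Theorem~\ref{criterion for uhp}. You instead bypass absolute monadicity entirely: you apply the hypothesis only to the canonical bar cofork $FGFGM\rightrightarrows FGM\rightarrow M$, whose $G$-image is split by the unit maps (as recorded in Remark~\ref{remark on def of homological}), conclude that $\epsilon_M$ is split epic so that every $T$-algebra is a retract of a free algebra, and then finish via Theorem~\ref{main thm} together with the retract-closure of replete reflective subcategories (image of an idempotent as an equalizer, and reflective full subcategories are closed under limits existing in the ambient category); all of these steps check out, including the translation between a split coequalizer of $(f,g)$ and a contracted right-exact sequence for $f-g$, which is where additivity of $G$ is used. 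Each approach buys something: the paper's proof is essentially free given the machinery of Theorem~\ref{criterion for uhp}, whereas yours is self-contained modulo Theorem~\ref{main thm} and in fact yields the stronger intermediate statement that every algebra is a retract of a free one. Moreover, you correctly identify a point the paper's one-liner glosses over: in the existence clause of absolute monadicity one is handed only a split coequalizer object $C$ in $\mathcal{C}$, not a priori of the form $GZ$ for some $Z$ in $\mathcal{C}^T$ (and $G$, being merely a right adjoint, need not carry the upstairs cokernel to $C$), so matching the corollary's hypothesis---whose third object is $GZ$---to absolute monadicity requires an argument about assembling the splitting data upstairs; your route never needs this, since for the bar cofork the downstairs coequalizer is already $GM$.
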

\begin{proof}
The assumed condition on $G$ is precisely what absolute monadicity of $G$ means in the abelian setting.
\end{proof}

\begin{corollary}\label{field extension monads are uniquely homologically presentable}
Suppose $L/K$ is a field extension and $T: \Mod(K) \rightarrow \Mod(K)$ the associated base change monad, i.e., $TM$ is the underlying $K$-module of $L\otimes_K M$. 
Then $T$ is uniquely homologically presentable.
\end{corollary}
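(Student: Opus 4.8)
The plan is to invoke the criterion we just proved, Theorem~\ref{criterion for uhp}, which reduces unique homological presentability of the base-change monad $T$ to the single statement that the forgetful (restriction-of-scalars) functor $G\colon \Mod(K)^T \simeq \Mod(L)\rightarrow \Mod(K)$ is \emph{absolutely} monadic. Since all the categories in sight are abelian and $G$ is additive, I would invoke the preceding Corollary, whose hypothesis is exactly the abelian reformulation of absolute monadicity: I must show that whenever a composable pair $X\rightarrow Y\rightarrow Z$ of $L$-module maps becomes split exact (i.e.\ a split coequalizer) after restricting scalars to $K$, then the pair is \emph{already} split exact as a sequence of $L$-modules. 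So the entire problem collapses to a concrete linear-algebra statement about splittings surviving a change of the underlying field.

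\textbf{The key steps.} First I would unwind what the hypothesis gives me: the $K$-linear sequence $GX\rightarrow GY\rightarrow GZ\rightarrow 0$ is split exact, so there is a $K$-linear section $s\colon GZ\rightarrow GY$ of the surjection, together with the usual splitting data exhibiting $GY\cong \ker \oplus \mathrm{im}$. The subtlety is that $s$ is only $K$-linear, not $L$-linear, so I cannot directly use it. The crucial observation I would exploit is that $L/K$ being a field extension makes $L$ a \emph{free} (hence faithfully flat) $K$-module, and more importantly that \emph{every} short exact sequence of $L$-vector spaces splits, because $\Mod(L)$ is the category of vector spaces over the field $L$ and vector spaces are free, so every surjection of $L$-modules admits an $L$-linear section automatically. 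This is the heart of the matter: the $K$-linear split-exactness is only being used to detect exactness, and exactness is a property that $G$ reflects because $G$ is faithful and the kernel/cokernel of an $L$-linear map is computed on underlying $K$-modules.

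So concretely I would argue: the $K$-split exactness of $GX\rightarrow GY\rightarrow GZ\rightarrow 0$ in particular forces this to be an exact sequence of $K$-modules, and since restriction of scalars is exact and faithful (indeed creates kernels and cokernels), the original sequence $X\rightarrow Y\rightarrow Z\rightarrow 0$ is exact in $\Mod(L)$. But an exact sequence of $L$-vector spaces is automatically split exact, because $Z$ is a free $L$-module and so the surjection $Y\rightarrow Z$ admits an $L$-linear section. Therefore $X\rightarrow Y\rightarrow Z\rightarrow 0$ is split exact in $\Mod(L)=\Mod(K)^T$, which is precisely the hypothesis of the Corollary, so $G$ is absolutely monadic and $T$ is uniquely homologically presentable.

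\textbf{The main obstacle} is conceptual rather than computational: one must notice that the \emph{only} content we need from the $K$-linear split coequalizer hypothesis is the underlying exactness, and that the required $L$-linear splitting then comes for free from the fact that $L$ is a field (every $L$-vector space is projective). It would be tempting but wrong to try to descend the given $K$-linear section $s$ to an $L$-linear one directly; the correct move is instead to discard $s$, recover only exactness, and manufacture a fresh $L$-linear section using freeness of vector spaces. The argument is robust enough that it would in fact work for any extension $R\rightarrow S$ for which every short exact sequence of $S$-modules splits, but the field hypothesis is exactly what guarantees this semisimplicity of $\Mod(L)$.
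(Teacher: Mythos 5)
Your proposal is correct and takes essentially the same approach the paper intends: the paper states this corollary without a written-out proof, deriving it directly from the immediately preceding abelian corollary (and hence Theorem~\ref{criterion for uhp}), and your argument---use that restriction of scalars reflects exactness, then manufacture the $L$-linear splitting from semisimplicity of $\Mod(L)\simeq\Mod(K)^T$---is precisely the omitted verification of that corollary's hypothesis.
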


\section{Explicit examples: Dedekind domains.}

First, recall the well-known classification of finitely generated modules over a Dedekind domain, which we will use throughout this section:
\begin{theorem}\label{classification of modules over dedekind domain}
Let $A$ be a Dedekind domain, and let $M$ be a finitely generated $A$-module. Then $M$ is isomorphic to a direct sum of a finitely generated projective $A$-module and finitely many $A$-modules of the form $A/\mathfrak{m}^n$ for various maximal ideals $\mathfrak{m}$ in $A$ and various positive integers $n$.
\end{theorem}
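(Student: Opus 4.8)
The plan is to separate $M$ into its torsion and torsion-free parts and treat each separately. First I would consider the torsion submodule $M_{\mathrm{tor}} = \{ m \in M : am = 0 \text{ for some nonzero } a \in A\}$, which is finitely generated since $A$ is Noetherian, and form the short exact sequence
\[ 0 \rightarrow M_{\mathrm{tor}} \rightarrow M \rightarrow M/M_{\mathrm{tor}} \rightarrow 0. \]
The quotient $M/M_{\mathrm{tor}}$ is finitely generated and torsion-free. The key input here is the standard fact that a finitely generated torsion-free module over a Dedekind domain is projective; granting this, the sequence splits, giving $M \cong M_{\mathrm{tor}} \oplus P$ with $P = M/M_{\mathrm{tor}}$ projective. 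This reduces the theorem to decomposing the torsion module $M_{\mathrm{tor}}$.

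Next I would decompose $M_{\mathrm{tor}}$ into primary components. Since $M_{\mathrm{tor}}$ is finitely generated and torsion, its annihilator is a nonzero ideal, which in a Dedekind domain factors uniquely as a product $\mathfrak{m}_1^{e_1} \cdots \mathfrak{m}_r^{e_r}$ of powers of finitely many maximal ideals. Because the $\mathfrak{m}_i^{e_i}$ are pairwise comaximal, the Chinese Remainder Theorem yields $A/\mathrm{Ann}(M_{\mathrm{tor}}) \cong \prod_i A/\mathfrak{m}_i^{e_i}$, and hence a decomposition $M_{\mathrm{tor}} \cong \bigoplus_i M(\mathfrak{m}_i)$ into primary components, where $M(\mathfrak{m})$ is the submodule annihilated by a power of $\mathfrak{m}$. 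It then suffices to show each primary component $M(\mathfrak{m})$ is a direct sum of cyclic modules $A/\mathfrak{m}^n$.

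For the last step I would localize at $\mathfrak{m}$. Every element of $A \setminus \mathfrak{m}$ acts invertibly on the $\mathfrak{m}$-primary module $M(\mathfrak{m})$, so the natural map $M(\mathfrak{m}) \rightarrow M(\mathfrak{m})_{\mathfrak{m}}$ is an isomorphism and $M(\mathfrak{m})$ is already a finitely generated $A_{\mathfrak{m}}$-module. Since $A$ is Dedekind, $A_{\mathfrak{m}}$ is a discrete valuation ring, hence a PID, and the structure theorem for finitely generated torsion modules over a PID decomposes $M(\mathfrak{m})$ into cyclic summands $A_{\mathfrak{m}}/\mathfrak{m}^n A_{\mathfrak{m}}$. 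Finally, the canonical map $A/\mathfrak{m}^n \rightarrow A_{\mathfrak{m}}/\mathfrak{m}^n A_{\mathfrak{m}}$ is an isomorphism (the source is already Artinian local, so elements of $A \setminus \mathfrak{m}$ act invertibly on it), so each summand has the required form $A/\mathfrak{m}^n$. Reassembling the pieces gives the claimed decomposition.

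The main obstacle is the first reduction: establishing that a finitely generated torsion-free module over a Dedekind domain is projective. The cleanest route I know proceeds by induction on the rank, embedding such a module into a finite-dimensional vector space over the fraction field and exhibiting it as an extension of a lower-rank module by a nonzero fractional ideal; each fractional ideal is invertible and hence projective, invertibility of nonzero ideals being essentially one of the equivalent defining properties of a Dedekind domain, and projectivity of the quotient makes every such extension split. Everything after this reduction is a routine application of localization and the PID structure theorem.
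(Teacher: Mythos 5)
Your proof is correct. The paper states this theorem without proof, recalling it as a well-known classical fact, so there is no argument in the paper to compare against; your proposal is the standard textbook proof (splitting off the torsion submodule using projectivity of finitely generated torsion-free modules, primary decomposition via unique factorization of ideals and the Chinese Remainder Theorem, and the structure theorem over the discrete valuation ring $A_{\mathfrak{m}}$), and every step, including the rank-induction sketch for the key projectivity lemma, checks out.
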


Now here is a result which we will use in the proof of Theorem~\ref{main dd corollary}:
\begin{prop}\label{explicit dedekind domain example}
Let $A$ be a Dedekind domain and let $\fgMod(A)$ denote the category of finitely generated $A$-modules. Then the partially-ordered set of reflective replete subcategories of $\fgMod(A)$ which contain the free $A$-modules is
is isomorphic to the
set of functions
\[ \Max\Spec(A) \rightarrow \{ 0, 1, 2, \dots , \infty\} \]
from the set $\Max\Spec(A)$ of maximal ideals of $A$ to the set of extended
natural numbers, under the partial ordering in which we let
$f\leq g$ if and only if $f(\mathfrak{m}) \leq g(\mathfrak{m})$ for all $\mathfrak{m}\in\Max\Spec(A)$.
\end{prop}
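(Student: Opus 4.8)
The plan is to use the structure theory of finitely generated modules over a Dedekind domain (Theorem~\ref{classification of modules over dedekind domain}) to pin down the indecomposables, and then to show that a replete reflective subcategory $\mathcal{A} \subseteq \fgMod(A)$ containing the free modules is determined entirely by its torsion content, prime-by-prime. Concretely, I would define one map sending such an $\mathcal{A}$ to the function $f_{\mathcal{A}}$ with $f_{\mathcal{A}}(\mathfrak{m}) = \sup\{ k \geq 0 : A/\mathfrak{m}^k \in \mathcal{A}\} \in \{0,1,2,\dots,\infty\}$, and a map in the other direction sending $f$ to the full subcategory $\mathcal{A}_f$ of those $M$ whose $\mathfrak{m}$-primary torsion submodule is annihilated by $\mathfrak{m}^{f(\mathfrak{m})}$ for every $\mathfrak{m}$ (equivalently, every indecomposable torsion summand $A/\mathfrak{m}^k$ of $M$ has $k \leq f(\mathfrak{m})$). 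The bulk of the proof is then to check that these are mutually inverse, order-preserving bijections.

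The essential tool throughout is that a full replete reflective subcategory is closed under all limits existing in the ambient abelian category $\fgMod(A)$; in particular it is closed under kernels, finite biproducts, and retracts (a retract of $X$ being the equalizer of $\id_X$ and an idempotent). I would first use this to show that $\mathcal{A} \mapsto f_{\mathcal{A}}$ is well defined and that $\mathcal{A} = \mathcal{A}_{f_{\mathcal{A}}}$. Three observations do the work. (i) Every rank-one projective (fractional ideal) $I$ is a direct summand of $A^2$, since $I \oplus I^{-1} \cong A^2$ by Steinitz; hence $I$, and therefore every finitely generated projective, lies in any $\mathcal{A}$ containing the free modules, so the projective part never obstructs membership. (ii) Fixing $\mathfrak{m}$ and choosing $\pi \in \mathfrak{m} \setminus \mathfrak{m}^2$, multiplication by $\pi^j$ is an endomorphism of $A/\mathfrak{m}^k$ with kernel $\cong A/\mathfrak{m}^{\min(j,k)}$; closure under kernels then forces $\{k : A/\mathfrak{m}^k \in \mathcal{A}\}$ to be downward closed, so it is $\{1,\dots,n\}$, all of $\mathbb{N}$, or empty, giving a well-defined value $f_{\mathcal{A}}(\mathfrak{m})$. (iii) For $M \in \mathcal{A}$, the primary part $M[\mathfrak{m}^\infty] = \ker(\pi^N : M \to M)$ for $N \gg 0$ is a kernel, hence in $\mathcal{A}$, and its summand of largest exponent is a retract, hence in $\mathcal{A}$; with (ii) this bounds the exponents of $M$ by $f_{\mathcal{A}}$, so $M \in \mathcal{A}_{f_{\mathcal{A}}}$. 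The reverse inclusion follows from closure under biproducts together with (i) and the definition of $f_{\mathcal{A}}$.

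For the converse direction I would show each $\mathcal{A}_f$ is genuinely a replete reflective subcategory containing the free modules. Repleteness and containment of the free modules are immediate from the defining condition. For reflectivity I would exhibit the reflector explicitly as the quotient $M \mapsto M \big/ \bigl( \bigoplus_{\mathfrak{m}} \mathfrak{m}^{f(\mathfrak{m})} M[\mathfrak{m}^\infty] \bigr)$, which truncates each torsion summand $A/\mathfrak{m}^k$ to $A/\mathfrak{m}^{\min(k, f(\mathfrak{m}))}$ and leaves the projective part untouched. Its universal property reduces to the elementary facts that $\hom(A/\mathfrak{m}^k, P) = 0$ for $P$ projective, that $\hom(A/\mathfrak{m}^k, A/\mathfrak{n}^j) = 0$ for $\mathfrak{m} \neq \mathfrak{n}$, and that any map $A/\mathfrak{m}^k \to A/\mathfrak{m}^j$ with $j \leq f(\mathfrak{m})$ kills $\mathfrak{m}^{f(\mathfrak{m})}$ and hence factors through the truncation. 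Order-preservation is then routine: $f \leq g$ pointwise is visibly equivalent to $\mathcal{A}_f \subseteq \mathcal{A}_g$, testing inclusion on the objects $A/\mathfrak{m}^{f(\mathfrak{m})}$.

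The step I expect to require the most care — the main conceptual obstacle — is resisting the temptation to invoke Lemma~\ref{lemma on sums and summands} and the weakly Krull-Schmidt formalism of the previous section. When the class group of $A$ is nontrivial, $\fgMod(A)$ is \emph{not} weakly Krull-Schmidt: for instance $A^2 \cong I \oplus I^{-1}$ exhibits two genuinely different decompositions into indecomposables, so Theorem~\ref{coordinatization thm} does not apply directly. The correct replacement is exactly the closure of reflective subcategories under limits used above, together with the structural fact that this forces \emph{all} projectives into $\mathcal{A}$ irrespective of the class group. Once that is isolated, the non-uniqueness of projective decompositions becomes irrelevant, and only the genuinely unique torsion exponents survive as coordinates, yielding precisely the functions $\Max\Spec(A) \rightarrow \{0,1,2,\dots,\infty\}$.
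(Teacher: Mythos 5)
Your proposal is correct, and its skeleton coincides with the paper's: you extract the same invariant $f_{\mathcal{A}}(\mathfrak{m}) = \sup\{k : A/\mathfrak{m}^k \in \mathcal{A}\}$ (with downward closure of the exponent set obtained, as in the paper, from closure of a reflective subcategory under kernels, via $A/\mathfrak{m}^{\min(j,k)} = \ker(\pi^j : A/\mathfrak{m}^k \to A/\mathfrak{m}^k)$); you define the same candidate subcategories $\mathcal{A}_f$; and your reflector $M \mapsto M/\bigl(\bigoplus_{\mathfrak{m}} \mathfrak{m}^{f(\mathfrak{m})} M[\mathfrak{m}^{\infty}]\bigr)$ is literally the paper's $v_f = \coker\, i_f$, since $\mathfrak{m}^{f(\mathfrak{m})}M \cap M[\mathfrak{m}^{\infty}] = \mathfrak{m}^{f(\mathfrak{m})}M[\mathfrak{m}^{\infty}]$ (the maximal ideal $\mathfrak{m}$ acts invertibly on $\mathfrak{p}$-primary torsion for $\mathfrak{p} \neq \mathfrak{m}$), so your verification of the universal property by hom-computations matches the paper's factorization argument through the projection $\eta_f$. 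Where you genuinely diverge is the first step. The paper asserts that Theorem~\ref{classification of modules over dedekind domain} makes $\fgMod(A)$ weakly Krull-Schmidt and then invokes Lemma~\ref{lemma on sums and summands} to conclude that $\mathcal{A}$ is determined by its indecomposables; you correctly observe that this is false when the class group of $A$ is nontrivial (Steinitz gives $I \oplus I^{-1} \cong A \oplus A$ with $I \not\cong A$, two decompositions not related by a permutation), so that lemma does not literally apply. Your replacement---deducing from limit-closure of replete reflective subcategories (kernels, retracts as equalizers of idempotents, finite biproducts as products) that every finitely generated projective lies in any $\mathcal{A}$ containing the frees, and that each primary torsion summand of an object of $\mathcal{A}$ again lies in $\mathcal{A}$---is a genuine improvement: it makes the nonuniqueness of projective decompositions irrelevant and yields the proposition for all Dedekind domains, whereas the paper's stated justification of this step is only valid as written when $A$ is a PID. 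In short, your route costs a little more elementary closure bookkeeping but buys correctness in full generality, and in effect patches a gap in the paper's own argument; the conclusion and all the substantive constructions are otherwise identical.
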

\begin{proof}
Suppose that $\mathcal{A}$ is a replete reflective subcategory of $\fgMod(A)$ which contains the free $A$-modules. By Theorem~\ref{classification of modules over dedekind domain}, $\fgMod(A)$ is weakly Krull-Schmidt, and by Lemma~\ref{lemma on sums and summands}, 
$\mathcal{A}$ is determined by which of the indecomposable finitely generated $A$-modules it contains.

Suppose that $\mathfrak{m}$ is a maximal ideal of $A$ and that $A/\mathfrak{m}^n$ is in $\mathcal{A}$, and let $i\leq n$. 
Then $A/\mathfrak{m}^i$ is the kernel of a map $A/\mathfrak{m}^n \rightarrow A/\mathfrak{m}^n$, and consequently $A/\mathfrak{m}^i$ is also in $\mathcal{A}$,
since a reflective category is closed under limits.
Consequently, $\mathcal{A}$ is completely determined by a single function
$f_{\mathcal{A}}: \Max\Spec(A) \rightarrow \{ 0, 1, 2, \dots , \infty\}$
from the set of maximal ideals of $A$ to the set of extended natural numbers; namely, $f_{\mathcal{A}}$ is the function sending a maximal ideal $\mathfrak{m}$ of $A$ to the largest integer $n$ such that $A/\mathfrak{m}^n$ is in $\mathcal{A}$, and by letting $f_{\mathcal{A}}(\mathfrak{m}) = \infty$ if $A/\mathfrak{m}^n$ is in $\mathcal{A}$ for all $n$.

We claim that, for each function 
$f: \Max\Spec(A) \rightarrow \{ 0, 1, 2, \dots , \infty\}$, there does indeed
exist a replete reflective subcategory $\mathcal{A}$ of $\fgMod(A)$ such that $f_{\mathcal{A}} = f$.
%For each finitely generated $A$-module $M$ and each maximal ideal $\mathfrak{m}$ of $A$, let $\tors_{\mathfrak{m}}(M)$ denote the sub-$A$-module of $M$ consisting of all elements $x\in M$ such that $ax = 0$ for all $a\in \mathfrak{m}$.
%Let $\mathcal{A}_f$ denote the full subcategory of $\fgMod(A)$ generated by the $A$-modules $M$ with the property that for all maximal ideals $\mathfrak{m}$ of $A$, $\mathfrak{m}^{f({\mathfrak{m}})}\tors_{\mathfrak{m}}(M) = 0$.
Let $\mathcal{A}_f$ denote the full subcategory of $\fgMod(A)$ generated by the $A$-modules $M$ with the property that, for each maximal ideal $\mathfrak{m}$ of $A$,
if there exists a monomorphism $A/\mathfrak{m}^n \rightarrow M$ of $A$-modules,
then $n\leq f(\mathfrak{m})$.
Clearly $\mathcal{A}_f$ contains $A$ as well as $A/\mathfrak{m}^n$ for all $n\leq f(\mathfrak{m})$,
and $\mathcal{A}_f$ does {\em not} contain $A/\mathfrak{m}^n$ if $n > f(\mathfrak{m})$.
Hence (using Theorem~\ref{classification of modules over dedekind domain}) $f_{\mathcal{A}_f} = f$.

Clearly $\mathcal{A}_f$ is full and replete in $\fgMod(A)$, so the only remaining question is whether $\mathcal{A}_f$ is reflective.
We now construct an explicit left adjoint for the inclusion functor $\mathcal{A}_f \hookrightarrow \fgMod(A)$.
Given an $A$-module $M$, let $t_f(M)$ denote the subset of $M$ consisting of all elements $x\in M$ such that,
for some maximal ideal $\mathfrak{m}$ of $A$, 
\begin{itemize} 
\item $x\in \mathfrak{m}^{f(\mathfrak{m})}M$, and 
\item there exists some positive integer $n$ such that $ax =0$ for all $a\in \mathfrak{m}^n$.
\end{itemize}
Let $u_f(M)$ denote the sub-$A$-module of $M$ generated by the subset $t_f(M)$. Clearly, if $g: M^{\prime} \rightarrow M$
is an $A$-module homomorphism, then $g(t_f(M^{\prime})) \subseteq t_f(M)$. Consequently $u_f$ is a functor from $\fgMod(A)$ to $\fgMod(A)$, and $u_f$ is equipped with a
natural transformation $i_f: u_f \rightarrow \id$, namely, the natural inclusion of $u_f(M)$ into $M$.
Let $v_f: \fgMod(A) \rightarrow \fgMod(A)$ be the functor given by $v_f(M) = \coker i_f(M)$. Clearly $v_f$ is equipped with a natural transformation $\eta_f: \id \rightarrow v_f$, namely, the natural projection of $M$ onto $M/u_f(M)$.

By Theorem~\ref{classification of modules over dedekind domain}, every finitely generated $A$-module is isomorphic to
$A^n \oplus \coprod_{i=1}^m A/\mathfrak{m}_i^{\epsilon_i}$ for some nonnegative integers $m,n$, some sequence of maximal ideals
$(\mathfrak{m}_1, \dots, \mathfrak{m}_m)$ of $A$, and some sequence of positive integers $(\epsilon_1, \dots ,\epsilon_m)$.
Clearly,
\[ v_f\left( A^n \oplus \coprod_{i=1}^m A/\mathfrak{m}_i^{\epsilon_i}\right) = A^n \oplus \coprod_{i=1}^m A/\mathfrak{m}_i^{\min\{\epsilon_i,f(\mathfrak{m})\}}\]
and $\eta_f( A^n \oplus \coprod_{i=1}^m A/\mathfrak{m}_i^{\epsilon_i})$ is the obvious projection map.
Consequently, for every finitely generated $A$-module $M$, $v_f(M)$ is in $\mathcal{A}_f$, and since every map 
from $A^n \oplus \coprod_{i=1}^m A/\mathfrak{m}_i^{\epsilon_i}$ to an object of $\mathcal{A}_f$ factors through the projection map $\eta_f$, 
the functor $v_f$, with its codomain restricted to $\mathcal{A}_f$, is left adjoint to the inclusion
$\mathcal{A}_f \hookrightarrow \fgMod(A)$. Consequently $\mathcal{A}_f$ is reflective.
\end{proof}

\begin{corollary}
Let $K,L$ be number fields, that is, finite extensions of $\mathbb{Q}$, with rings of integers $A$ and $B$, respectively. Let $L/K$ be a field extension.
Let $T$ be the monad associated to the induction-restriction adjunction between $\fgMod(A)$ and $\fgMod(B)$, i.e., $T(M)$ is the underlying $A$-module of $B\otimes_A M$.
Then $\Ho(\HPres(T))$ has only a single element. That is, there exists (up to natural equivalence) only one homological presentation of $T$.
\end{corollary}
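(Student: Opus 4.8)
The plan is to reduce the statement to a combinatorial count of reflective replete subcategories, and then to feed in the splitting behaviour of primes in the extension $B/A$.

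First I would record the standard number-theoretic input: since $L/K$ is a finite extension of number fields and $B$ is the integral closure of $A$ in $L$, the ring $B$ is finitely generated as an $A$-module. Consequently the composite of extension of scalars $M \mapsto B \otimes_A M$ with restriction of scalars carries $\fgMod(A)$ into $\fgMod(A)$, the base-change monad $T$ is coequalizable by Remark~\ref{remark on coequalizability}, and its Eilenberg--Moore category $\mathcal{C}^T$ is equivalent to $\fgMod(B)$, with the free $T$-algebra on a finitely generated $A$-module $X$ corresponding under this equivalence to the $B$-module $B \otimes_A X$. By Theorem~\ref{main thm} it therefore suffices to prove that $\Loc(T)$---the partially ordered collection of replete reflective subcategories of $\fgMod(B)$ that contain every $B \otimes_A X$---has exactly one element, namely all of $\fgMod(B)$.

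Next I would invoke Proposition~\ref{explicit dedekind domain example} applied to the Dedekind domain $B$: every replete reflective subcategory of $\fgMod(B)$ containing the free $B$-modules is determined by a function $f : \Max\Spec(B) \to \{0,1,2,\dots,\infty\}$, where $f(\mathfrak{P})$ is the largest $k$ with $B/\mathfrak{P}^k$ in the subcategory. Since the free $B$-modules are themselves among the modules $B\otimes_A X$ (take $X = A^n$), any $\mathcal{A} \in \Loc(T)$ is of this form, and the whole category $\fgMod(B)$ corresponds to the constant function $\infty$. Thus the statement reduces to showing that the additional requirement that $\mathcal{A}$ contain all of the torsion free algebras $B \otimes_A (A/\mathfrak{m}^n)$ forces $f_{\mathcal{A}}$ to be constantly $\infty$. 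The key computation is to identify these torsion modules: for a maximal ideal $\mathfrak{m}$ of $A$ and $n \geq 1$ we have $B \otimes_A (A/\mathfrak{m}^n) \cong B/\mathfrak{m}^n B$, and, factoring $\mathfrak{m} B = \prod_{\mathfrak{P} \mid \mathfrak{m}} \mathfrak{P}^{e(\mathfrak{P}/\mathfrak{m})}$ in $B$ and applying the Chinese Remainder Theorem, $B/\mathfrak{m}^n B \cong \prod_{\mathfrak{P}\mid\mathfrak{m}} B/\mathfrak{P}^{n\, e(\mathfrak{P}/\mathfrak{m})}$. Since $\mathcal{A}$ is a replete reflective subcategory of the weakly Krull--Schmidt category $\fgMod(B)$ with biproduct, Lemma~\ref{lemma on sums and summands} shows $\mathcal{A}$ is closed under direct summands, so each factor $B/\mathfrak{P}^{n\,e(\mathfrak{P}/\mathfrak{m})}$ lies in $\mathcal{A}$. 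Every maximal ideal $\mathfrak{P}$ of $B$ lies over the maximal ideal $\mathfrak{m} = \mathfrak{P}\cap A$ with ramification index $e(\mathfrak{P}/\mathfrak{m}) \geq 1$, so letting $n\to\infty$ exhibits $B/\mathfrak{P}^{k}$ in $\mathcal{A}$ for arbitrarily large $k$; hence $f_{\mathcal{A}}(\mathfrak{P}) = \infty$ for every $\mathfrak{P}$. Therefore $f_{\mathcal{A}}$ is constantly $\infty$, so $\mathcal{A} = \fgMod(B)$ and $\Ho(\HPres(T)) \simeq \Loc(T)$ is a single element.

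I expect the main obstacle to be bookkeeping rather than conceptual: one must make sure that the equivalence $\mathcal{C}^T \simeq \fgMod(B)$ really does send free $T$-algebras to extended modules $B\otimes_A X$, so that the torsion modules $B/\mathfrak{m}^n B$ genuinely belong to every element of $\Loc(T)$, and that the indecomposable summands produced by the Chinese Remainder decomposition are exactly the $B/\mathfrak{P}^{k}$ governed by Proposition~\ref{explicit dedekind domain example}. The essential number-theoretic content---that $B$ is module-finite over $A$ and that each prime of $B$ sits over a prime of $A$ with positive ramification index---is classical, and it is precisely the positivity $e(\mathfrak{P}/\mathfrak{m}) \geq 1$ that makes the exponents $n\,e(\mathfrak{P}/\mathfrak{m})$ cofinal in the positive integers and thereby forces unique homological presentability.
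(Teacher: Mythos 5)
Your proposal is correct and follows essentially the same route as the paper's own proof: both reduce via Theorem~\ref{main thm} and Proposition~\ref{explicit dedekind domain example} to showing that any element of $\Loc(T)$, since it contains the Kleisli modules $B\otimes_A(A/\mathfrak{p}^n)\cong B/\mathfrak{p}^nB$, must contain $B/\mathfrak{P}^{n\,e(\mathfrak{P}/\mathfrak{p})}$ as a summand for every prime $\mathfrak{P}$ over $\mathfrak{p}$, forcing the classifying function to be constantly $\infty$. Your writeup merely makes explicit (via the Chinese Remainder decomposition, the bound $e(\mathfrak{P}/\mathfrak{p})\geq 1$, and the appeal to Lemma~\ref{lemma on sums and summands} for closure under summands) steps the paper leaves implicit.
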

\begin{proof}
The Kleisli category $\fgMod(B)_T$ contains all the $B$-modules of the form $B\otimes_A M$ for $M$ a finitely generated $A$-module. For any maximal ideal $\mathfrak{m}$ of $B$, and any positive integer $n$, 
$\fgMod(B)_T$ contains a module $B\otimes_A M$ with $B/\mathfrak{m}^i$ as a summand for some $i\geq n$, namely, let $\mathfrak{p}$ be the (unique) prime of $A$ under $\mathfrak{m}$, and let $M = A/\mathfrak{p}^n$.
Consequently the only reflective replete subcategory of $\fgMod(B)$ which
contains $\fgMod(B)_T$ is the one which, in the language of Proposition~\ref{explicit dedekind domain example}, corresponds to the function
$\Max\Spec(B) \rightarrow \{ 0, 1, \dots, \infty\}$ sending every maximal ideal to $\infty$, i.e., $\fgMod(B)$ itself.
\end{proof}

\begin{theorem}\label{main dd corollary}
Let $A$ be a Dedekind domain,
let $\Sets$ denote the category of sets, and
let $T$ denote the monad on $\Sets$ associated to the free-forgetful adjunction between $\Mod(A)$ and $\Sets$, i.e., $T(S)$ is the underlying set of the
free $A$-module generated by $S$. 
Then the partially-ordered collection $\Ho(\Fin\HPres(T))$ of natural equivalence classes of finitary homological presentations of $T$ is equivalent to the
set of functions
\[ \Max\Spec(A) \rightarrow \{ 0, 1, 2, \dots , \infty\} \]
from the set $\Max\Spec(A)$ of maximal ideals of $A$ to the set of extended
natural numbers, under the partial ordering in which we let
$f\leq g$ if and only if $f(\mathfrak{m}) \leq g(\mathfrak{m})$ for all $\mathfrak{m}\in\Max\Spec(A)$.
\end{theorem}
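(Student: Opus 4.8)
The plan is to use Theorem~\ref{main thm} to replace the homotopy category of finitary homological presentations by a poset of subcategories of the Eilenberg--Moore category, and then to match that poset against the classification of Proposition~\ref{explicit dedekind domain example} by cutting down from arbitrary $A$-modules to finitely generated ones. First I would record that the category of algebras for the free $A$-module monad $T$ on $\Sets$ is $\Mod(A)$, i.e.\ $\Sets^T\simeq\Mod(A)$; this category is cocomplete, so $T$ is coequalizable, and the forgetful functor $\Mod(A)\rightarrow\Sets$ preserves filtered colimits since these are computed on underlying sets. The finitary half of Theorem~\ref{main thm} therefore applies and yields an order-equivalence $\Ho(\Fin\HPres(T))\simeq\Fin\Loc(T)$, where $\Fin\Loc(T)$ is the poset of finitary replete reflective subcategories $\mathcal{D}\subseteq\Mod(A)$ containing every (possibly infinite-rank) free $A$-module. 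It then suffices to produce an order-isomorphism between $\Fin\Loc(T)$ and the poset of replete reflective subcategories of $\fgMod(A)$ containing the free modules, since the latter poset is identified with the stated function poset by Proposition~\ref{explicit dedekind domain example}. The candidate map is restriction, $\mathcal{D}\mapsto\mathcal{D}\cap\fgMod(A)$.

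The step I expect to be the main obstacle, and the technical heart of the argument, is controlling the reflector $L\colon\Mod(A)\rightarrow\mathcal{D}$ tightly enough to move between the two module categories. The key point I would establish is that, because $\mathcal{D}$ contains all free modules, the unit $\eta_M\colon M\rightarrow LM$ is always epimorphic: choosing a surjection $p\colon A^{(I)}\twoheadrightarrow M$, the naturality square for $\eta$ at $p$ reads $\eta_M\circ p = Lp\circ\eta_{A^{(I)}}$, and since $\eta_{A^{(I)}}$ is an isomorphism (as $A^{(I)}\in\mathcal{D}$) while $p$ is surjective, the image of $\eta_M$ coincides with the image of $Lp$; but $Lp$ is epic because the left adjoint $L$ is right exact. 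Hence $LM$ is always a quotient of $M$, so $L$ sends finitely generated modules to finitely generated modules. Restricting $L$ then exhibits $\mathcal{D}\cap\fgMod(A)$ as a replete reflective subcategory of $\fgMod(A)$ containing the free modules, so restriction does land in the target poset. This is precisely where the Noetherian nature of the Dedekind domain $A$ is used, through the coincidence of finite generation with finite presentation.

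To see that restriction is injective I would use that reflectors preserve colimits. Given $M\in\mathcal{D}$, write $M=\colim_i M_i$ as the filtered colimit of its finitely generated submodules; applying $L$ and using $LM\cong M$ gives $M\cong\colim_i LM_i$, a filtered colimit of objects of $\mathcal{D}\cap\fgMod(A)$. Thus every object of $\mathcal{D}$ is a filtered colimit of objects of $\mathcal{D}\cap\fgMod(A)$, and since a finitary $\mathcal{D}$ is closed under filtered colimits in $\Mod(A)$, the subcategory $\mathcal{D}$ is recovered as the closure of $\mathcal{D}\cap\fgMod(A)$ under filtered colimits; in particular $\mathcal{D}$ is determined by $\mathcal{D}\cap\fgMod(A)$.

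Finally, for surjectivity I would show every replete reflective $\mathcal{A}\subseteq\fgMod(A)$ containing the frees arises as a restriction. By Proposition~\ref{explicit dedekind domain example} we may write $\mathcal{A}=\mathcal{A}_f$ for a function $f$, and I would let $\mathcal{D}_f\subseteq\Mod(A)$ be the full subcategory of \emph{all} $A$-modules admitting no monomorphism $A/\mathfrak{m}^n\hookrightarrow M$ with $n>f(\mathfrak{m})$. By construction $\mathcal{D}_f\cap\fgMod(A)=\mathcal{A}_f$, it is replete, and it contains all (torsion-free) free modules. It is closed under filtered colimits because $A/\mathfrak{m}^n$ is finitely presented, so a monomorphism $A/\mathfrak{m}^n\hookrightarrow\colim_i M_i$ would factor through, and become monomorphic at, some finite stage $M_j$, forcing $n\le f(\mathfrak{m})$. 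Its reflectivity I would obtain from the quotient $M\mapsto M/u_f(M)$, whose defining formula in Proposition~\ref{explicit dedekind domain example} already makes sense for an arbitrary module $M$; checking that this quotient lands in $\mathcal{D}_f$ and has the universal property is routine (it reduces, via filtered colimits, to the finitely generated computation already carried out there). Hence $\mathcal{D}_f\in\Fin\Loc(T)$ and restricts to $\mathcal{A}_f$. Both restriction and $f\mapsto\mathcal{D}_f$ manifestly preserve inclusions, so they furnish mutually inverse order-isomorphisms; composing with Proposition~\ref{explicit dedekind domain example} identifies $\Ho(\Fin\HPres(T))$ with the poset of functions $\Max\Spec(A)\rightarrow\{0,1,2,\dots,\infty\}$, as claimed.
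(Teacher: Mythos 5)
Your overall architecture is close to the paper's: identify $\Sets^T\simeq\Mod(A)$, invoke the finitary half of Theorem~\ref{main thm}, reduce to finitely generated modules via filtered colimits, and realize each function $f$ by extending the reflector $v_f$ of Proposition~\ref{explicit dedekind domain example} along filtered colimits (your $\mathcal{D}_f$ and its reflector are exactly the paper's $\mathcal{A}_f$ and $L=\colim v_f$, and your finite-presentation argument for closure under filtered colimits matches the paper's). However, the step you yourself flag as the technical heart contains a genuine gap. You argue that the unit $\eta_M\colon M\rightarrow LM$ is epimorphic because $\operatorname{im}(\eta_M)=\operatorname{im}(Lp)$ and ``$Lp$ is epic because the left adjoint $L$ is right exact.'' But $Lp$ is an epimorphism only \emph{in the category $\mathcal{D}$}, and epimorphisms in a reflective subcategory of $\Mod(A)$ need not be surjective on underlying modules. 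This is not a hypothetical worry in the present setting: take $A=\mathbb{Z}$ and $\mathcal{D}$ the torsion-free abelian groups, which is precisely $\mathcal{A}_f$ for $f$ identically zero---replete, reflective, finitary, and containing all free modules. There the inclusion $\mathbb{Z}\rightarrow\mathbb{Q}$ is an epimorphism in $\mathcal{D}$ (two maps out of $\mathbb{Q}$ into a torsion-free group agreeing on $\mathbb{Z}$ differ by a map factoring through the torsion group $\mathbb{Q}/\mathbb{Z}$, hence agree) but is certainly not surjective. So ``epic in $\mathcal{D}$'' gives no control on the image of $Lp$ as a module map, and the claim that $L$ preserves finite generation---on which both the well-definedness of your restriction map $\mathcal{D}\mapsto\mathcal{D}\cap\fgMod(A)$ (i.e.\ its reflectivity in $\fgMod(A)$) and your injectivity argument ($M\cong\colim_i LM_i$ with each $LM_i$ finitely generated) depend---is left unproved. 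It happens to be true for the subcategories that actually occur, but that is a consequence of the classification, not available while proving it; nor is the claim recoverable by purely formal adjunction arguments (finitariness makes $L$ preserve objects compact relative to $\mathcal{D}$, but identifying those with finitely generated modules is circular).

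The paper sidesteps this entirely: it never shows the reflector preserves finite generation, nor that $\mathcal{D}\cap\fgMod(A)$ is reflective in $\fgMod(A)$. Instead it pins a finitary $\mathcal{D}$ down by which modules $A/\mathfrak{m}^n$ it contains, using only that a reflective subcategory is closed under limits computed in $\Mod(A)$, together with closure under filtered colimits coming from finitariness, and then realizes every $f$ by the explicit $\mathcal{A}_f$. If you want to repair your argument along its own lines, the natural patch is structural rather than formal: given $M\in\mathcal{D}$ and a monomorphism $A/\mathfrak{m}^n\rightarrow M$, the $\mathfrak{m}^n$-torsion submodule of $M$ is a kernel of maps between objects of $\mathcal{D}$ (use generators of $\mathfrak{m}^n$ and the map $M\rightarrow M^{\oplus k}$), hence lies in $\mathcal{D}$; since $A/\mathfrak{m}^n$ is a self-injective artinian principal ideal ring, the embedded copy of $A/\mathfrak{m}^n$ splits off as a direct summand of that torsion submodule, and direct summands of objects of $\mathcal{D}$ lie in $\mathcal{D}$ (again as kernels). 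This yields $\mathcal{D}\subseteq\mathcal{A}_{f_{\mathcal{D}}}$, while closure under filtered colimits gives the reverse inclusion, so $\mathcal{D}=\mathcal{A}_{f_{\mathcal{D}}}$ without ever invoking surjectivity of units; with that substitution your proof closes and coincides in substance with the paper's.
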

\begin{proof}
Since every $A$-module is the colimit of its finitely generated sub-$A$-modules, and since the partially-ordered set of sub-$A$-modules of a given $A$-module is filtered, we know that every $A$-module is a filtered colimit of finitely generated $A$-modules. Hence a finitary element of $\Loc(\Sets^T) \cong \Loc(\Mod(A))$
is determined by which finitely generated $A$-modules it contains. 

From here, the proof resembles that of Proposition~\ref{explicit dedekind domain example}: given a finitary element $\mathcal{A}$ of $\Loc(\Sets^T)$,
since $\mathcal{A}$ is reflective, it is closed under limits computed in $\Mod(A)$.
Consequently, if $A/\mathfrak{m}^n$ is in $\mathcal{A}$, then so is
$A/\mathfrak{m}^i$ for all $i\leq n$. Of course $A$ is also in $\mathcal{A}$.
Hence we can specify which finitely generated $A$-modules are contained in $\mathcal{A}$, and consequently all of $\mathcal{A}$, by specifying (as in the proof of Proposition~\ref{explicit dedekind domain example}) 
a function
$f_{\mathcal{A}}: \Max\Spec(A) \rightarrow \{ 0, 1, 2, \dots , \infty\}$
from the set of maximal ideals of $A$ to the set of extended natural numbers; namely, $f_{\mathcal{A}}$ is the function sending a maximal ideal $\mathfrak{m}$ of $A$ to the largest integer $n$ such that $A/\mathfrak{m}^n$ is in $\mathcal{A}$, and by letting $f_{\mathcal{A}}(\mathfrak{m}) = \infty$ if $A/\mathfrak{m}^n$ is in $\mathcal{A}$ for all $n$.

Now we still need to know that, for each 
$f: \Max\Spec(A) \rightarrow \{ 0, 1, 2, \dots , \infty\}$, there does indeed
exist a replete reflective subcategory $\mathcal{A}$ of $\Mod(A)$ such that $f_{\mathcal{A}} = f$. The argument is as follows: let $\mathcal{A}_f$ be the replete full subcategory of $\Mod(A)$ generated by all the $A$-modules $M$ with the property that,
for each maximal ideal $\mathfrak{m}$ of $A$, if $A/\mathfrak{m}^n \rightarrow M$ is a monomorphism of $A$-modules, then $n\leq f(\mathfrak{m})$.
Clearly $f_{\mathcal{A}_f} = f$ and $\mathcal{A}_f$ is replete, full, and contains the free $A$-modules. 
We need to know that $\mathcal{A}_f$ is reflective and finitary.
Let $L: \Mod(A) \rightarrow \Mod(A)$ be the functor given by letting 
$L(M)$ be the colimit $\colim_{X\in fg(M)} v_f(X)$, where
$fg(M)$ is the (filtered) category of finitely generated sub-$A$-modules of $M$, and $v_f$ is the functor defined on $\fgMod(A)$ in the proof of Proposition~\ref{explicit dedekind domain example}. Let $\eta M: M \rightarrow L(M)$ be the natural map
$M  \cong \colim_{X\in fg(M)} X \rightarrow \colim_{X\in fg(M)} v_f(X)$
given by the natural transformation $\eta_f: \id \rightarrow v_f$.
Then $L(M)$ is in $\mathcal{A}_f$, since $A/\mathfrak{m}^n$ is finitely generated
and hence every monomorphism from $A/\mathfrak{m}^n$ to the filtered colimit
$\colim_{X\in fg(M)} v_f(X)$ factors through a monomorphism $A/\mathfrak{m}^n \rightarrow v_f(X)$ for some $X\in fg(M)$.
Furthermore, if $T$ is an object of $\mathcal{A}_f$, then
\begin{align*} 
 \hom_{\Mod(A)}(M, T) 
  &\cong \lim_{X\in fg(M)} \hom_{\Mod(A)}( X, T) \\
  &\cong \lim_{X\in fg(M)} \hom_{\Mod(A)}( v_f(X), T) \\
  &\cong \hom_{\Mod(A)}( \colim_{X\in fg(M)} v_f(X), T) \\
  &\cong \hom_{\Mod(A)}( L(X), T) 
\end{align*}
so $L$ is indeed left adjoint to the inclusion $\mathcal{A}_f\hookrightarrow\Mod(A)$. (More carefully: $L = GF$, where $G$ is the inclusion $\mathcal{A}_f\hookrightarrow\Mod(A)$, and $F$ is $L$ with its codomain restricted to $\mathcal{A}_f$.)
So $\mathcal{A}_f$ is reflective.
The functor $L = GF$ commutes with filtered colimits by construction, and $F$ is a left adjoint and hence commutes with all colimits, and $G$ is full and faithful and hence reflects colimits; so $G$ commutes with filtered colimits, and hence $\mathcal{A}_f$ is finitary.
\end{proof}

\bibliography{/home/asalch/texmf/tex/salch}{}

\def\cprime{$'$} \def\cprime{$'$} \def\cprime{$'$} \def\cprime{$'$}
\begin{thebibliography}{1}

\bibitem{beckthesis}
Jon Beck.
\newblock Triples, algebras, and cohomology.
\newblock {\em Reprints in Theory and Applications of Categories}, 2:1--59,
  2003.

\bibitem{MR0255631}
Jean B{\'e}nabou and Jacques Roubaud.
\newblock Monades et descente.
\newblock {\em C. R. Acad. Sci. Paris S\'er. A-B}, 270:A96--A98, 1970.

\bibitem{MR1417719}
A.~D. Elmendorf, I.~Kriz, M.~A. Mandell, and J.~P. May.
\newblock {\em Rings, modules, and algebras in stable homotopy theory},
  volume~47 of {\em Mathematical Surveys and Monographs}.
\newblock American Mathematical Society, Providence, RI, 1997.
\newblock With an appendix by M. Cole.

\bibitem{MR0163909}
A.~Grothendieck.
\newblock \'{E}l\'ements de g\'eom\'etrie alg\'ebrique. {II}. \'{E}tude globale
  \'el\'ementaire de quelques classes de morphismes.
\newblock {\em Inst. Hautes \'Etudes Sci. Publ. Math.}, (8):222, 1961.

\bibitem{MR1712872}
Saunders Mac~Lane.
\newblock {\em Categories for the working mathematician}, volume~5 of {\em
  Graduate Texts in Mathematics}.
\newblock Springer-Verlag, New York, second edition, 1998.

\end{thebibliography}
\bibliographystyle{plain}
\end{document}